\numberwithin{equation}{section} 
\def\th@plain{%
  \thm@notefont{}
  \itshape 
}
\def\th@definition{%
  \thm@notefont{}
  \normalfont 
}
\newtheorem{thm}{Theorem}[section]
\newtheorem{prop}[thm]{Proposition}
\newtheorem{lem}[thm]{Lemma}
\theoremstyle{definition}
\newtheorem{remark}[thm]{Remark}
\newcommand{\R}{\mathbb{R}}
\renewcommand{\epsilon}{\varepsilon}
\def\e{\epsilon}
\setlist[description]{nosep}
\title{Delayed loss of stability in singularly perturbed finite-dimensional gradient flows}
\author{{\scshape Giovanni Scilla}\\
Department of Mathematics and Applications ``R. Caccioppoli''\\ University of Naples ``Federico II''\\
Via Cintia, Monte S. Angelo - 80126 Naples \\
(ITALY)
 \\
\\
{\scshape Francesco Solombrino}\\
Department of Mathematics and Applications ``R. Caccioppoli''\\ University of Naples ``Federico II''\\
Via Cintia, Monte S. Angelo - 80126 Naples \\
(ITALY)}
\date{}
\begin{document}

\maketitle

\begin{abstract}
In this paper we study the singular vanishing-viscosity limit of a gradient flow in a finite dimensional Hilbert space, focusing on the so-called delayed loss of stability of  stationary solutions. We find a class of time-dependent energy functionals and initial conditions for which we can explicitly calculate the first discontinuity time $t^*$ of the limit. For our class of functionals, $t^*$ coincides with the blow-up time of the solutions of the 
linearized system around the equilibrium, and is in particular strictly greater than the time $t_c$ where strict local minimality with respect to the driving energy gets lost.  Moreover, we show that, in a right neighborhood of $t^*$, rescaled solutions of the singularly perturbed problem converge to heteroclinic solutions of the gradient flow.  Our results complement the previous ones by Zanini \cite{Zanini}, where the situation we consider was excluded by assuming the so-called transversality conditions, and the limit evolution consisted of strict local minimizers of the energy up to a negligible set of times.
\end{abstract}

\noindent
{\bf Keywords:} gradient flow, heteroclinic solutions, singular perturbations, dynamical systems, variational methods\\
\noindent
{\bf Mathematics Subject Classification:}  34K26, 34K18, 34C37, 47J35, 49J45

\section{Introduction}

{\bf \noindent Setting of the problem}. The analysis of singularly perturbed gradient flow-type problems, in particular vanishing-viscosity limits, is often related to some physical models like as quasistatic rate-independent processes (see e.g.~\cite{MiRou} for a general introduction to the topic) and then has been widely studied in the last years (see e.g. \cite{MielkeTrusk,Negri1,Ago-Rossi,Zanini,Ago1} and the references therein).

For a small positive parameter $\epsilon$, we consider the singularly perturbed problem
\begin{equation}
\epsilon\dot{u}_\epsilon(t)=-\nabla_xF(t,u_\epsilon(t)),
\label{problem1}
\end{equation}
where $t\in[0,T]$, the driving energy $F:[0,T]\times X\to\R$ is sufficiently smooth, 
$X$ is a finite-dimensional Hilbert space and $\nabla_x F(t,x)$ denotes the differential of $F(t,x)$ with respect to variable $x$.

When dealing with problems as in (\ref{problem1}), the first natural object is to prove that the solutions $u_\epsilon$ converge as $\epsilon\to0$, up to the extraction of a subsequence, to a limit function $u$ pointwise in $[0,T]$. Then one aims to describe the evolution of $u$, expecting $u$ to be a curve of critical points with jumps at degenerate critical points for $F(t,\cdot)$, and to characterize the behavior of $u$ at the jumps. 

\medskip

{\bf \noindent Previous results in literature}. A first contribution to these problems, in finite dimension, was provided by Zanini in~\cite{Zanini}. In that paper, the fundamental assumptions on the sufficiently smooth energy $F$ are: (i) $F$ has a finite number of critical points (ii) the vector field $\nabla_x F(t,x)$ satisfies the so called \emph{transversality conditions} at every degenerate critical point (see~\cite[Assumption 2]{Zanini}). Under these hypotheses, it was shown in~\cite[Theorem~3.7]{Zanini} that, starting from a suitable initial datum $u_0$, there exists a unique piecewise-smooth curve $u$ with a finite jump set $J=\{t_1,\dots,t_k\}$ such that:
\begin{equation}
\nabla_x F(t,u(t))=0
\label{problem2}
\end{equation}
with $\nabla_x^2F(t,u(t))$ positive definite for all $t\in[t_{i-1},t_i)$ and $i=1,\dots, k-1$. Moreover, the whole sequence $(u_\epsilon)_\epsilon$ converges to $u$ uniformly on compact subsets of $[0,T]\backslash J$. At every jump point $t_i\in J$, the left limit $u_-(t_i)$ is a degenerate critical point for $F(t_i,\cdot)$ and there exists a \emph{unique} curve $v$ connecting $u_-(t_i)$ to the right limit $u_+(t_i)$, in the sense that $\displaystyle\lim_{s\to-\infty}v(s)=u_-(t_i)$, $\displaystyle\lim_{s\to+\infty}v(s)=u_+(t_i)$, and such that
\begin{equation}
\dot{v}(s)=-\nabla_xF(t_i,v(s)), \quad \text{for all }s\in\mathbb{R}.
\end{equation}
It is also proved that suitable rescalings of $u_\epsilon$ converge to the heteroclinic solution $v$. 

A more general  point of view is taken in a recent paper by Agostiniani and Rossi~\cite{Ago-Rossi}, whose results hold for a wider class of energies, not necessarily complying with the transversality conditions. Combining ideas from the variational approach to gradient 
flows (see e.g.~\cite{AmbGiSav}) with the techniques for the vanishing-viscosity approximation of rate-independent systems (see references in \cite{Ago-Rossi}), they proved the existence of a limit curve $u$ by an argument of compactness, relying on some energetic-type estimates and the assumption that for every $t\in[0,T]$, the critical points of $F(t,\cdot)$ are \emph{isolated} (actually, this hypothesis is implied by the transversality conditions). Under the previous assumptions, they proved (see~\cite[Theorem 1]{Ago-Rossi}) that, up to a subsequence, $(u_\epsilon)_\epsilon$ pointwise converge to a solution $u$ of the limit problem (\ref{problem2}) at each continuity point $t$. Moreover, $u$ has a countable jump set and its right and left limits $u_+(t)$ and $u_-(t)$ at each jump point exist. It is worth to mention that their analysis goes beyond these results, since they provided also a suitable energetic characterization of its fast dynamics at jumps and, under some growth assumption on $F$ at its critical points (see \cite[Theorem~2]{Ago-Rossi}), they showed that $u$ improves to a \emph{balanced viscosity solution} of (\ref{problem2}), in the sense of Mielke, Rossi and Savar\'e (see e.g.~\cite{MiRoSav1,MiRoSav2}).

\medskip

{\bf \noindent Our results.} Our paper deals with a qualitative analysis of (\ref{problem1}) in the spirit of \cite{Zanini}, still in finite dimension, but focusing on a situation which is not compatible with the transversality conditions. Indeed, a direct consequence of these conditions is that whenever a curve of stationary points reaches a degenerate critical point $(\tau, \xi)$ where $\nabla^2_x F(\tau, \xi)$ has a zero eigenvalue, this one is a turning point for the curve. In particular, solutions of $\nabla_x F(t, x)=0$ exist, locally around $\xi$, only for $t\le\tau$ (see \cite[Remark 2.2]{Zanini}), which intuitively forces a jump in the limit evolution. Although transversality conditions are generic in the sense of  \cite{Ago-Rossi-Sav}, they exclude interesting situations, often appearing in the applications: for instance, stationary solutions to \eqref{problem2} whose stability changes depending on the time $t$,  usually giving rise to bifurcation of other branches of critical points.

We namely assume that the equation  $\nabla_x F(t, x)=0$  has the trivial solution $x=0$ for all $t\in[0,T]$: the equilibrium $x=0$ is however a strict local minimizer of the energy $x\mapsto F(t,x)$ only for all $t\in[0,t_c)$, with $t_c<T$, and eventually turns to a saddle point. It is not difficult to see (Proposition \ref{propzero}) that the behavior of the limit $u(t)$ of solutions to \eqref{problem1} with vanishing initial data is different than the one in \cite{Zanini}. Indeed, no istantaneous jump occurs for $t=t_c$, and we still have $u(t)=0$ in a right neighborhood of $t_c$. This fact is known as {\it delayed loss of stability}  in the literature about singular perturbation of ODE's, where it is also referred to as Ne\v{\i}shstadt phenomenon~\cite{Neishstadt1,Neishstadt2}.  The occurrence of the phenomenon is also discussed by Mielke and Truskinovsky in the framework of discrete visco-elasticity models (see~\cite{MielkeTrusk} for details). In the case of ODE's it is possible to exactly compute the first discontinuity time $t^*$ for $u(t)$ (see, for instance \cite{Russi}). Our main goal is to perform a similar task in our context. 

We assume the driving energy $F$ to satisfy the assumptions {\bf  (F0)-(F3)}, Section~\ref{prel}, considered in \cite{Ago-Rossi}, in order to recover compactness, and in Theorem \ref{jump} we explicitly characterize $t^*$ upon considering additional assumptions  {\bf (A1)-(A2)} on the Hessian matrix of the driving energy $A(t):=\nabla_x^2F(t,0)$.

We have to warn the reader that our assumptions involve significant restrictions on the class of energies we may consider. Besides assuming that the minimal eigenvalue $\lambda_1(t)$ of the matrix $A(t)$ remains simple on the time interval, we have to assume that the corresponding one-dimensional eigenspace is fixed. Under these hypotheses, we may characterize $t^*$ as the first time where the primitive function $t\mapsto \int_0^t \lambda_1(s)\,\mathrm{d}s$ changes its sign, or (equivalently within our assumptions) as the minimal time after which solutions of the linearized system
\[
\epsilon \dot u= A(t)u
\]
with suitable, generic initial data (see \eqref{assume}) blow up in the limit. We see this as a good guiding principle for further analysis; it is  however in our opinion a nontrivial issue how, and to which degree of  generality, the result can be extended without using  assumption {\bf (A1)}. Its role is apparent in the (still not straightforward) proof of Claim 1 in Theorem \ref{jump}, allowing us to conveniently estimate the effect of the nonlinear terms, in a fixed direction, locally around the equilibrium. We however remark that our assumptions only involve the linearization of \eqref{problem1} around the stationary solutions and are easy to be checked in practice (see Remark \ref{comments}).

The gradient flow structure of \eqref{problem1} is relevantly exploited in the proof of Theorem \ref{jump}. Indeed, when (according to Claim 1) the trajectories $u_\e$ reach the boundary of a ball which contains only $0$ as critical point, at $t_\epsilon\approx t^*$, a nonvanishing amount of energy is spent due to the presence of the term
\[
\int_0^{t_\epsilon} \|\dot u_\epsilon(s)\|\,\|\nabla_x F(s, u_\epsilon(s)\|\,\mathrm{d}s
\]
in the energy balance for the solutions of \eqref{problem1}. This gap in the energy leads to the formation of a discontinuity in the limit. We have indeed to exclude the possibility that,  as an effect of the nonlinear terms, trajectories may return asymptotically close to the saddle point $x=0$ in a fast time, which would result in no discontinuity in the limit. This is excluded in our case by energetic reasons. We also notice that having a-priori established compactness according to Theorem \ref{AR}, as deduced in \cite{Ago-Rossi} for singularly perturbed gradient-flows, is crucial to the argument of Theorem \ref{jump}. This is another difference with the situation in \cite{Zanini}, where compactness is recovered by the explicit construction of the limit. We also point out that in the second part of the proof, after establishing Claim 1,  assumption  {\bf (A1)} is not used, and the argument can be successfully used any time one is able to establish Claim 1.

Once the occurrence of this delayed change of stability is stated, we analyze, along the lines of~\cite[Theorem~3.5]{Zanini}, the behavior of the limit evolution $u$ in a right neighborhood of the discontinuity point $t^*$. More precisely, we prove that if $t_\e\to t^*$ is suitably chosen, the limit points of the rescaled functions $w_\epsilon(s):=u_\epsilon(t_\epsilon +\epsilon s)$ are heteroclinic solutions of the autonomous gradient system
\begin{align}\label{heterosystem0}
\dot w(s)=-\nabla_x F(t^*, w(s))
\end{align}
originating from the equilibrium $w=0$ for $s\to -\infty$. It is worth to note that, differently from the case considered in \cite{Zanini}, here there is no uniqueness of heteroclinic trajectories. Indeed, we show the existence of at least two distinct ones, depending on the sign of $\langle u_\epsilon(0),e_1\rangle$ the component along the eigenspace corresponding to the minimal eigenvalue of $\nabla^2_x F(t^*, 0)$. From a different and more general point of view, Agostiniani and Rossi (see~\cite[Proposition 3.1]{Ago-Rossi}) proved that the left and the right limits $u_-(t^*)$ and $u_+(t^*)$ of the limit evolution $u$ at a discontinuity point $t^*$ can be connected by a finite union of heteroclinic solutions of the system (\ref{heterosystem0}). However, this result does not contain our aforementioned convergence property, that can be used to determine explicitly the right limit $u_+(t^*)$ in some simple situations, like as the one where the Hessian matrix $\nabla_x^2F(t^*,0)$ has only one negative eigenvalue, the minimal one $\lambda_1(t^*)$.  This case is analysed in Section \ref{single}.

While the analysis of the present paper is confined to finite dimension, we think there are several reasons of interest for possible extensions to infinite dimensional settings. 
As a relevant example, coming from the applications, of an energy-driven evolution where a stationary configuration changes its stability along the time, and eventually evolves into different structures, we may  mention for instance the experiments in \cite{Plos}. In an intrinsically curved elastic material subject to a boundary load, the straight configuration, which is locally minimizing for large values of the force, loses stability upon slowly releasing and evolves into nontrivial structures, emerging as bifurcation branches at different critical values of the force.

\medskip

{\bf \noindent Plan of the paper.} The paper is organized as follows. In Section~\ref{prel} we fix notation and state the problem, recalling the main assumptions under which the general existence Theorem~\ref{AR} by Agostiniani and Rossi (see \cite[Theorem 1]{Ago-Rossi}) holds. Section~\ref{dellostab} deals with the main results, concerning the delayed loss of stability of the trivial equilibrium $u=0$. First, we show that if $(u_\epsilon)_\epsilon$ is a solution to the singularly perturbed problem (\ref{problem1}), such that $u_\epsilon(0)\to0$, then it converges, as $\epsilon\to0$, to $u=0$ on $[0,t^*)$ (Proposition~\ref{propzero}), with $t^*$, defined by (\ref{tstar}). Then, under assumptions {\bf (A1)-(A2)} on the Hessian matrix of the driving energy $\nabla_x^2F(t,0)$ and choosing initial data suitably decaying to 0 with polynomial rate (\ref{decay}), we prove the main result (Theorem~\ref{jump}) that $t^*$ is the first  jump point for the limit function $u$. In Section~\ref{behjump} we analyze the behavior of the limit function at the jump, showing that rescaled solutions to the singularly perturbed problem converge to heteroclinic solutions of the gradient flow (Theorem~\ref{hetero-prel}). In some simple situations, the right limit $u_+(t^*)$ can be exactly determined: this is the case, for instance, when the Hessian matrix $\nabla_x^2F(t,0)$ possesses just one negative eigenvalue (Theorem~\ref{jump2}). Finally, in Section~\ref{one-dimensional}, we revisit the well-known one-dimensional case (see e.g. \cite{Russi}) under different assumptions.

\section{Notation and preliminary results}\label{prel}

Fix $T>0$. We consider the singular limit, as $\epsilon\to0$, of the gradient flow equation
\begin{equation}
\epsilon \dot{u}=-\nabla_{{x}} F(t,u) \quad \text{in $X$ for a.e. $t\in[0,T]$,}
\label{eq1}
\end{equation}
where $(X,\|\cdot\|)$ is a $n$-dimensional Hilbert space, $n\geq1$, the energy functional $F: [0,T]\times X\to\mathbb{R}$ is sufficiently smooth, say
\\
\begin{description}
\item[(F0)] $F\in C^2([0,T]\times X)$,
\end{description}
\indent
\\
and $\nabla_x F$ denotes the differential of $F(t,x)$ with respect to the variable $x$. Moreover, we require on $F$ the following assumptions:
\\
\begin{description}
\item[(F1)] the map $\mathcal{F}:u\to \displaystyle\sup_{t\in[0, T]}|F(t,u)|$ satisfies: \\ 

$\forall\rho>0$, the sublevel set $\{u\in X:\, \mathcal{F}(u)\leq \rho\}$ is bounded;\\

\item[(F2)] there exist $C_1,C_2>0$ such that 
\begin{equation}
 |\partial_t F(t,u)|\leq C_1 F(t,u)+C_2,\quad \forall (t,u)\in[0,T]\times X,
\end{equation}
where $\partial_t F$ denotes the partial derivative of $F(t,x)$ with respect to the variable $t$;\\
\item[(F3)] for any $t\in[0,T]$, the set of critical points
\begin{equation}
C(t):=\{u\in X:\, \nabla_x F(t,u)=0\}
\end{equation}
consists of isolated points.
\end{description}
\indent
\\
Under the previous assumptions on the energy functional $F$, Agostiniani and Rossi (see~\cite{Ago-Rossi}) proved that, up to extracting time-independent subsequences, the solutions $u_\epsilon$ of the singularly perturbed problem (\ref{eq1}) converge {\it pointwise}, as $\epsilon\to0$, to a function $u$ solving
\begin{equation}
\nabla_x F(t,u(t))=0
\end{equation}
at each continuity point $t$. It is worth mentioning that, for our analysis, we only need this compactness result that they prove, among other things, in \cite[Theorem 1]{Ago-Rossi}. We now therefore only list  the properties that are needed in the rest of the paper. For a reader who is also familiar with the paper \cite{Ago-Rossi}, we remark that the proof of the statements below actually does not require the careful analysis of the so-called energy-dissipation cost that they perform, but only the first implication in \cite[Proposition 4.1]{Ago-Rossi}. 

\begin{thm}[Agostiniani-Rossi] \label{AR} Assume that {\bf (F0)-(F3)} hold. Then, up to a subsequence independent of $t$, $(u_\epsilon)_\epsilon$ solving (\ref{eq1}) converge pointwise, as $\epsilon\to0$, to a function $u:[0,T]\to X$ satisfying the following properties:
\begin{description}
\item[(i)] the jump set $J$ of $u$ is at most countable;
\item[(ii)] $u$ is continuous on $[0,T]\backslash J$, and solves
\begin{equation}
\nabla_x F(t,u(t))=0 \quad \text{in $X$ for every $t\in [0,T]\backslash J$};
\end{equation}
\item[(iii)] the left and right limits $u_{-}(t)$ and $u_{+}(t)$ exist at every $t\in(0,T)$, and so do the limits $u_{+}(0)$ and $u_{-}(T)$.
\end{description}
\end{thm}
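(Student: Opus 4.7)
My plan is to deduce the result from three ingredients: a uniform energy estimate providing a-priori bounds for $(u_\e)_\e$, a Helly-type selection for the monotone dissipation, and the isolation assumption \textbf{(F3)} used to pin down the limit equation.

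\textbf{Step 1 (energy--dissipation balance).} Testing \eqref{eq1} against $\dot u_\e$ gives
\begin{equation*}
\frac{\mathrm{d}}{\mathrm{d}t}F(t,u_\e(t))=\partial_t F(t,u_\e(t))-\e\|\dot u_\e(t)\|^2.
\end{equation*}
Integrating and invoking \textbf{(F2)} plus Gronwall's lemma, I obtain a uniform-in-$\e$ bound on $\sup_{t\in[0,T]}F(t,u_\e(t))$; then \textbf{(F1)} confines the orbits to a fixed compact set $K\subset X$. As a consequence, the nondecreasing function
\begin{equation*}
\mathsf{D}_\e(t):=\e\int_0^t\|\dot u_\e(s)\|^2\,\mathrm{d}s=\int_0^t\|\dot u_\e(s)\|\cdot\|\nabla_x F(s,u_\e(s))\|\,\mathrm{d}s
\end{equation*}
(the second equality coming from the gradient-flow equation itself) is uniformly bounded on $[0,T]$.

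\textbf{Step 2 (Helly selection).} Since $\mathsf{D}_\e$ is monotone and uniformly bounded, Helly's theorem produces a subsequence (not relabelled) along which $\mathsf{D}_\e\to\mathsf{D}$ pointwise on $[0,T]$, with $\mathsf{D}$ nondecreasing. I denote by $J$ the at-most-countable jump set of $\mathsf{D}$; this will be the candidate jump set for $u$ in (i). A diagonal argument on a countable dense set $\mathcal{Q}\subset[0,T]$ containing $J\cup\{0,T\}$, combined with the compactness from Step 1, yields a further subsequence for which $u_\e$ converges pointwise on $\mathcal{Q}$.

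\textbf{Step 3 (limit equation and pointwise convergence everywhere).} For $t\in[0,T]\setminus J$, continuity of $\mathsf{D}$ at $t$ means that $\mathsf{D}_\e(t+\delta)-\mathsf{D}_\e(t-\delta)$ can be made arbitrarily small by first choosing $\delta$ small, then $\e$ small. If along some subsequence $u_\e(t)\to\bar u$ with $\nabla_x F(t,\bar u)\ne 0$, then on a neighborhood of $(t,\bar u)$ the modulus $\|\nabla_x F\|$ stays bounded below by some $c>0$; but the gradient flow forces the trajectory to cross such a neighborhood in time $O(\e)$, contributing dissipation bounded below independently of $\e$, a contradiction. Hence every cluster point of $u_\e(t)$ lies in $C(t)$. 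The isolation assumption \textbf{(F3)} implies $C(t)\cap K$ is finite, and a similar dissipation-counting argument rules out oscillation of $u_\e(t)$ between distinct critical points, yielding $u_\e(t)\to u(t)$ for a single $u(t)\in C(t)$. This proves (ii) together with pointwise convergence on the whole interval, while (iii) follows by applying the same reasoning to sequences $s_n\in[0,T]\setminus J$ approaching $t\in(0,T)$ from either side.

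\textbf{Main obstacle.} The delicate step is Step 3: quantitatively ruling out that $u_\e(t)$ drifts or oscillates between distinct critical points of $F(t,\cdot)$ as $\e\to 0$. This demands coupling the smallness of $\mathsf{D}_\e$-increments near continuity points of $\mathsf{D}$ with a uniform lower bound on $\|\nabla_x F\|$ away from $C(t)$, obtained locally from \textbf{(F3)} and the compactness of $K$. This is exactly the content of the first implication in \cite[Proposition 4.1]{Ago-Rossi}, which the present argument uses as a black-box.
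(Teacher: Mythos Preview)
The paper does not prove this theorem at all: it is quoted as a result of Agostiniani and Rossi \cite{Ago-Rossi}, and the authors explicitly remark that, for their purposes, its proof only requires ``the first implication in \cite[Proposition~4.1]{Ago-Rossi}''. Your sketch follows precisely that route --- energy bounds via \textbf{(F2)} and Gronwall, compactness via \textbf{(F1)}, Helly selection on the dissipation, and then \cite[Proposition~4.1]{Ago-Rossi} as a black box to pass from dissipation control to the limit equation and to rule out oscillation --- so it is fully in line with what the paper indicates. There is nothing to compare beyond this; your outline and the paper's remark point to the same argument.
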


\section{Delayed loss of stability}\label{dellostab}
We are interested in a situation where a trivial equilibrium exists at any time. We namely assume that $0\in C(t)$ for all $t\in[0,T]$; i.e.,
\begin{equation}
\nabla_x F(t,0)=0,\quad \forall t\in[0,T],
\label{eq2}
\end{equation}
and define
\begin{equation}
A(t):=\nabla^2_x F(t,0), \quad t\in[0,T].
\end{equation}
Throughout the paper, we will denote with the symbol $\lambda_1(t)$ the minimum eigenvalue of the matrix $A(t)$. We will assume that
\begin{align}\label{ipotesi_fond}
\lambda_1(0)>0 \quad \mbox{ and }\quad\int_{0}^T\lambda_1(s)\,\mathrm{d}s<0\,.
\end{align}

The first assumption implies in particular that $A(0)$ is strictly positive definite, so that $x=0$ is a strict local minimizer of the energy. On the other hand, the second one entails  that $x=0$ must have turned to a saddle point at some time in the interval $(0, T)$. We further note that, due to \eqref{ipotesi_fond},  it is well defined
\begin{equation}
t^*:=\min\left\{t\in(0,T):\, \int_{0}^t\lambda_1(s)\,\mathrm{d}s=0\right\}.
\label{tstar}
\end{equation}

We immediately notice that $t^*$ is strictly larger than the critical time $t_c$ where bifurcation for the stationary functional may occur, that is \begin{equation}
t_c:=\inf\left\{t\in(0,T):\, \lambda_1(t)=0\right\}.
\end{equation}
Indeed, as we are going to show in the next proposition, $t_c$ plays no significant role for the limit evolution. Trajectories starting close to $0$ will stay close to the trivial equilibrium at least until $t^*$.  This is a different scenario than the one considered in \cite{Zanini}, where, in view of the {\it transversality conditions} (see \cite[Assumption 2]{Zanini}), a jump in the limit trajectory occurs exactly at the time when local minimality gets lost.
\begin{prop}\label{propzero}
Let $(u_\epsilon)_\epsilon$ be a sequence of solutions to \eqref{eq1}, with $u_\epsilon(0)\to0$ as $\epsilon\to0$. Assume that \eqref{eq2} and \eqref{ipotesi_fond} hold, and define $t^*$ as in \eqref{tstar}. Then, for all $t\in[0,t^*)$, $u_\epsilon(s)\to0$ as $\epsilon\to0$ uniformly in $[0,t]$.
\end{prop}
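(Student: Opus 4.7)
The plan is to analyse \eqref{eq1} close to the trivial equilibrium by Taylor-expanding $\nabla_x F(s, u) = A(s)u + r(s, u)$ with $\|r(s, u)\| \leq C_0 \|u\|^2$ on a fixed ball $B_{\rho_0}(0)$ (available from \textbf{(F0)} and \eqref{eq2}), and then showing that the nonlinear remainder cannot overcome the contraction of the linearised flow, which is encoded in the primitive $\Lambda(t):=\int_0^t\lambda_1(s)\,\mathrm{d}s$. The crucial geometric input is that $\Lambda(0)=0$, $\Lambda$ is strictly positive on $(0,t^*)$ by definition of $t^*$, and $\Lambda(s) \geq \tfrac{1}{2}\lambda_1(0)s$ for $s$ near $0$; this is the feature that will make the integrand in the Bernoulli bound below integrable with a bound of order $\epsilon$.

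Setting $W_\epsilon(s) := \|u_\epsilon(s)\|^2$, I would test \eqref{eq1} against $u_\epsilon$, use the symmetry of $A(s)$ and the spectral bound $\langle A(s)v, v\rangle\geq \lambda_1(s)\|v\|^2$, and obtain, on any interval where $u_\epsilon$ stays in $B_{\rho_0}(0)$, the Bernoulli-type differential inequality
\[
\dot W_\epsilon(s) \leq -\frac{2\lambda_1(s)}{\epsilon}W_\epsilon(s) + \frac{2C_0}{\epsilon}W_\epsilon(s)^{3/2}.
\]
The substitution $\phi_\epsilon := W_\epsilon\,e^{2\Lambda/\epsilon}$ absorbs the linear term, leaving $\dot\phi_\epsilon \leq \tfrac{2C_0}{\epsilon}\phi_\epsilon^{3/2}\,e^{-\Lambda/\epsilon}$, and a separation of variables yields the explicit pointwise estimate
\[
\|u_\epsilon(s)\| \leq \frac{e^{-\Lambda(s)/\epsilon}}{\|u_\epsilon(0)\|^{-1} - \dfrac{C_0}{\epsilon}\displaystyle\int_0^s e^{-\Lambda(\sigma)/\epsilon}\,\mathrm{d}\sigma}.
\]

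To exploit this, for any fixed $t<t^*$ I would bound $\tfrac{1}{\epsilon}\int_0^s e^{-\Lambda(\sigma)/\epsilon}\,\mathrm{d}\sigma$ uniformly in $s\in[0,t]$ and $\epsilon$: splitting at a small $s_1>0$ on which $\Lambda(\sigma)\geq\tfrac{1}{2}\lambda_1(0)\sigma$, the near-zero part is $O(1)$ by direct evaluation, while on $[s_1,t]$ the strict positivity $\Lambda\geq m>0$ (granted by $t<t^*$) produces an exponentially small contribution. Since $\|u_\epsilon(0)\|^{-1}\to\infty$ while this integral stays bounded and $e^{-\Lambda(s)/\epsilon}\leq 1$, the right-hand side of the bound vanishes uniformly in $s\in[0,t]$. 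The a priori restriction to $B_{\rho_0}(0)$ is then lifted by a standard continuation argument: defining $\tau_\epsilon:=\inf\{s\in[0,t]:\|u_\epsilon(s)\|=\rho_0\}$, one has $\tau_\epsilon>0$ as soon as $\|u_\epsilon(0)\|<\rho_0/2$; the estimate above applies on $[0,\tau_\epsilon]$, and the resulting vanishing uniform bound forces $\tau_\epsilon=t$ once $\epsilon$ is small enough. The step I expect to be the main obstacle is the Bernoulli closure: one must ensure that the correction coming from the quadratic remainder does not destroy the contraction of the linear part, and this goes through only because of the sharp interplay between $\Lambda(0)=0$, $\lambda_1(0)>0$, and $\Lambda>0$ on $(0,t^*)$ that is baked into \eqref{ipotesi_fond} and \eqref{tstar}.
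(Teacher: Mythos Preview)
Your argument is sound in outline, but the remainder estimate $\|r(s,u)\|\le C_0\|u\|^2$ is not available under \textbf{(F0)}: from $F\in C^2$ you only get continuity of $\nabla_x^2 F$, hence $r(s,u)=o(\|u\|)$ uniformly in $s$, i.e.\ for every $\eta>0$ there is $\sigma>0$ with $\|r(s,u)\|\le\eta\|u\|$ on $B_\sigma(0)$. A quadratic bound would require $F\in C^3$ (or $\nabla_x^2 F$ Lipschitz in $x$), which is not assumed. So as written there is a gap in the very first step.

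The fix, however, makes the whole Bernoulli machinery unnecessary. With the linear remainder estimate the differential inequality becomes
\[
\dot W_\epsilon(s)\le -\frac{2(\lambda_1(s)-\eta)}{\epsilon}\,W_\epsilon(s),
\]
and plain Gronwall gives $\|u_\epsilon(s)\|^2\le\|u_\epsilon(0)\|^2\exp\!\bigl(-\tfrac{2}{\epsilon}\int_0^s(\lambda_1(\tau)-\eta)\,\mathrm{d}\tau\bigr)$. This is precisely the paper's route: fix $t<t^*$, use $\Lambda>0$ on $(0,t]$ together with $\lambda_1(0)>0$ to choose $\eta>0$ so small that $\int_0^s(\lambda_1-\eta)\,\mathrm{d}\tau\ge 0$ for all $s\in[0,t]$ (your splitting of the integral near $s=0$ and on $[s_1,t]$ encodes exactly this), then choose $\sigma$ according to $\eta$, and the exit-time argument finishes. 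Your Bernoulli closure and the Laplace-type bound on $\tfrac{1}{\epsilon}\int_0^s e^{-\Lambda/\epsilon}\,\mathrm{d}\sigma$ are correct under the extra $C^3$ regularity and give a somewhat more explicit pointwise estimate, but once the remainder is linear in $\|u\|$ the nonlinear correction is absorbed directly into the Gronwall exponent and no separation of variables is needed.
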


\proof
We fix $t\in[0,t^*)$.
Since $\lambda_1(0)>0$ and, by assumption, $\int_{0}^s\lambda_1(\tau)\,\mathrm{d}\tau>0$ for all $s\in (0,t]$, by the mean value theorem we may find $\eta>0$ such that 
\begin{equation}
\int_{0}^s(\lambda_1(\tau)-\eta)\,\mathrm{d}\tau\geq0
\end{equation}
for all $s\in [0,t]$. We now set
\begin{equation}
B(t,u):=\nabla_x F(t,u)-A(t)u\,.
\end{equation}
By construction, $\|B(t,u)\|=o(\|u\|)$ as $\|u\|\to0$, uniformly with respect to $t$. Therefore, for $\eta$ fixed as above, we find
$\sigma=\sigma(\eta)>0$
with
\begin{equation}\label{B}
\|B(t,u)\|\leq \eta\|u\|, \quad \text{if }\|u\|\leq \sigma\,.
\end{equation}

We now define
\begin{equation}
t^{\sigma,\epsilon}:=\inf \left\{s\in[0,t]:\, \|u_\epsilon(s)\|\geq \sigma\right\},
\end{equation}
that is, the first time $s\in[0,t]$ such that $\|u_\epsilon(s)\|\geq \sigma$. We note that, since $u_\epsilon(0)\to0$ as $\epsilon\to0$, for $\epsilon$ small enough it results $t^{\sigma,\epsilon}>0$.

For any $s\in[0,t^{\sigma,\epsilon}]$ we have
\begin{equation}
\begin{split}
\frac{\mathrm{d}}{\mathrm{d}s}\frac{\|u_\epsilon(s)\|^2}{2}&=\langle u_\epsilon(s), \dot{u}_\epsilon(s)\rangle=-\frac{1}{\epsilon}\langle A(s) u_\epsilon(s), {u}_\epsilon(s)\rangle\\
&-\frac{1}{\epsilon}\langle B(s,u_\epsilon(s)), {u}_\epsilon(s)\rangle\\
&\leq -\frac{\lambda_1(s)}{\epsilon}\|u_\epsilon(s)\|^2+\frac{\eta}{\epsilon}\|u_\epsilon(s)\|^2\\
&=-\frac{\|u_\epsilon(s)\|^2}{\epsilon}(\lambda_1(s)-\eta).
\end{split}
\end{equation}
By Gronwall's Lemma we then deduce that
\begin{equation}
\|u_\epsilon(s)\|^2\leq \|u_\epsilon(0)\|^2 \text{exp}\left({-\frac{2}{\epsilon}\int_{0}^s(\lambda_1(\tau)-\eta)\,\mathrm{d}\tau}\right),\quad \forall s\in[0,t^{\sigma,\epsilon}].
\end{equation}
The previous estimate implies that $t^{\sigma,\epsilon}=t$ and $u_\epsilon(s)\to0$ as $\epsilon\to0$ uniformly in $[0,t]$.
\endproof

Our aim is then to provide sufficient conditions under which (at least for properly chosen initial conditions), the limit trajectory $u(t)$, whose existence is provided by the general Theorem \ref{AR},  has a jump exactly at $t^*$. To this end, we have to make some assumptions on the Hessian matrix $A(t)$ of the energy at $0$. We namely require that
\begin{itemize}
\item[{\bf (A1)}] there exists $\rho >0$ such that the minimum eigenvalue $\lambda_1(t)$ of $A(t)$ is \emph{simple} for all $t\in[0,t^*+\rho]$ and its eigenspace $L$ is fixed. We denote by $e_1$ its generator, i.e. $L=\text{span}(e_1)$;
\item[{\bf (A2)}] $\mathrm{det}(A(t^*))\neq0$. 
\end{itemize}

\begin{remark}
Condition {\bf (A2)} combined with \eqref{ipotesi_fond} implies in particular that $\lambda_1(t^*)<0$. With this, it easily follows from {\bf(A1)} that, in our setting, $t^*$ can be characterized as the minimal time after which solutions of the linear system
\[
\epsilon \dot u= A(t) u
\]
with vanishing initial data blow up in the limit as $\epsilon\to 0$. It indeed suffices to consider initial conditions satisfying \eqref{decay} and \eqref{assume} below, while blow-up before $t^*$ can be excluded arguing as in Proposition \ref{propzero}.
\end{remark}

\begin{remark}\label{comments}
The first of the two conditions in {\bf (A1)} is always satisfied whenever $\lambda_1(0)$ is simple and $t^*$ is sufficiently small. The second one is indeed a quite strong assumption, which is however met in a number of nontrivial situations. A straightforward sufficient condition for it, provided $\lambda_1(t)$ stays simple, is that $A(t)$ commutes with $A(0)$ for every $t$. This is for instance the case if $A(t)$ is a perturbation of the identity of the form $I\pm\varphi(t)B$ for some scalar function $\varphi(t)$ bounded away from $0$.
\end{remark}

We may now prove our main result, showing that $t^*$ is actually  a  jump point for the limit function $u$. To this end, we  have to assume that the initial data $u_\epsilon(0)$ converge to $0$ with polynomial decay rate, that is, there exists $\alpha>0$ 
such that
\begin{align}\label{decay}
\lim_{\epsilon \to 0} u_\epsilon(0)=0\quad\mbox{and}\quad \liminf_{\epsilon \to 0} \frac{\|u_\epsilon(0)\|}{\epsilon^\alpha}>0\,.
\end{align}

\begin{thm}\label{jump}
Assume {\bf (F0)-(F3)}, as well as \eqref{eq2}. Let $(u_\epsilon)_\e$ be a sequence of solutions to (\ref{eq1}), with initial data $u_\epsilon(0)$ as in \eqref{decay}. Under assumption \eqref{ipotesi_fond}, define $t^*$ as in (\ref{tstar}). 
Assume that {\bf(A1)} and {\bf(A2)} hold, and that
\begin{equation}
\liminf_{\epsilon \to 0}\frac{|\langle u_\epsilon(0),e_1\rangle|^2}{\|u_\epsilon(0)\|^2}>0\,,
\label{assume}
\end{equation}
with $e_1$ as in {\bf(A1)}.  Let $u:[0,T]\to X$ be the pointwise limit  of $(u_\epsilon)_\epsilon$, as given by Theorem \ref{AR}. Then, $t^*$ is the minimal element of the jump set $J$ of the function $u$. 
\end{thm}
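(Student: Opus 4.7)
The argument splits naturally into two stages. The first (``Claim~1'') shows that for a suitably small $r_0>0$ the trajectories $u_\epsilon$ reach $\partial B_{r_0}(0)$ at times $t_\epsilon\to t^*$; the second uses the gradient-flow energy balance to preclude continuity of the limit $u$ at $t^*$. Minimality of $t^*$ in $J$ is then immediate from Proposition~\ref{propzero}, which gives $u\equiv 0$ on $[0,t^*)$.

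To prepare, I would fix $r_0>0$ so small that $\bar B_{r_0}(0)$ contains only $0$ as a critical point of $F(t,\cdot)$ for $t$ in a closed neighborhood $[t^*-\eta,t^*+\eta]$ of $t^*$ (possible by {\bf (F3)} and continuity of the critical set), and such that a Taylor expansion using {\bf (A2)} yields the Hessian-driven lower bound $\|\nabla_x F(t,x)\|\ge c_0\|x\|$ on $B_{r_0}$ uniformly in $t\in[t^*-\eta,t^*+\eta]$, for some $c_0>0$. Let $t_\epsilon:=\inf\{t\ge 0:\|u_\epsilon(t)\|\ge r_0\}$. Proposition~\ref{propzero} already yields $\liminf_{\epsilon\to 0}t_\epsilon\ge t^*$; the core of Claim~1 is the matching upper bound $\limsup_{\epsilon\to 0}t_\epsilon\le t^*$.

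For this I would exploit the decomposition enforced by {\bf (A1)}. Writing $u_\epsilon=v_\epsilon e_1+w_\epsilon$ with $w_\epsilon\in L^\perp$, the invariance of $L^\perp$ under $A(t)$ (symmetry plus invariance of $L$) gives the scalar equation
\[
\epsilon\dot v_\epsilon(t)=-\lambda_1(t)v_\epsilon(t)-\langle B(t,u_\epsilon(t)),e_1\rangle,\qquad B(t,u):=\nabla_x F(t,u)-A(t)u=O(\|u\|^2).
\]
Variation of parameters represents $v_\epsilon(t)$ as $v_\epsilon(0)\exp\bigl(-\epsilon^{-1}\!\int_0^t\lambda_1(s)\,ds\bigr)$ plus a Duhamel correction. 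Taylor-expanding $\int_0^t\lambda_1$ around $t^*$ (using that $\lambda_1(t^*)<0$, forced by {\bf (A2)} together with \eqref{ipotesi_fond}), one checks that at $t=t^*+\delta_\epsilon$ with $\delta_\epsilon:=K\epsilon|\log\epsilon|$ and $K>\alpha/|\lambda_1(t^*)|$, the linear amplification factor is of order $\epsilon^{-K|\lambda_1(t^*)|(1+o(1))}$. Coupled with $|v_\epsilon(0)|\ge c\epsilon^\alpha$, provided by \eqref{decay} and \eqref{assume}, the homogeneous term alone would force $|v_\epsilon(t)|\to\infty$. A bootstrap assuming $\|u_\epsilon(s)\|\le r_0$ on $[0,t^*+\delta_\epsilon]$ (so that $\|B(s,u_\epsilon(s))\|\le Cr_0\|u_\epsilon(s)\|$) and verifying that the Duhamel correction, whose prefactor is small in $r_0$, cannot cancel the leading exponential then yields $\|u_\epsilon(t)\|\ge|v_\epsilon(t)|>r_0$ before $t^*+\delta_\epsilon$, a contradiction unless $t_\epsilon\le t^*+\delta_\epsilon$. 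Making this bootstrap rigorous is the main technical obstacle: the amplification is so sharp ($\sim\epsilon^{-K|\lambda_1(t^*)|}$) that one must carefully balance the $O(\|u_\epsilon\|^2)$ source against it in a fixed direction, which is precisely what {\bf (A1)} allows. Extracting a subsequence one may further assume $u_\epsilon(t_\epsilon)\to x_0\in\partial B_{r_0}(0)$.

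Granted Claim~1, suppose for contradiction that $t^*\notin J$. Continuity of $u$ at $t^*$ and Proposition~\ref{propzero} force $u(t^*)=0$; since $u(s)\in C(s)$ for every $s\notin J$ near $t^*$ by Theorem~\ref{AR}, and $0$ is the only critical point of $F(s,\cdot)$ in $\bar B_{r_0}$ for such $s$, we infer $u\equiv 0$ on $[t^*,t^*+2\delta]\setminus J$ for some $\delta>0$. Pick $s_0\in(t^*,t^*+\delta)\setminus J$ (possible since $J$ is countable): then $u_\epsilon(s_0)\to 0$, so $\|u_\epsilon(s_0)\|<r_0/2$ eventually. Integrating $\frac{d}{dt}F(t,u_\epsilon)=\partial_t F-\epsilon\|\dot u_\epsilon\|^2$ over $[0,s_0]$ and passing to the limit by dominated convergence (justified by pointwise convergence $u_\epsilon\to 0$ a.e.\ and the uniform bound derived from {\bf (F1)}--{\bf (F2)}) yields
\[
D_\epsilon:=\int_0^{s_0}\epsilon\|\dot u_\epsilon(s)\|^2\,ds=\int_0^{s_0}\|\dot u_\epsilon(s)\|\,\|\nabla_x F(s,u_\epsilon(s))\|\,ds\longrightarrow 0.
\]
On the other hand, since $\|u_\epsilon(t_\epsilon)\|=r_0$ and $\|u_\epsilon(s_0)\|<r_0/2$, by continuity there is a last time $\tau'_\epsilon\in[t_\epsilon,s_0)$ at which $\|u_\epsilon\|=r_0$ and a first subsequent time $\tau_\epsilon\in(\tau'_\epsilon,s_0]$ at which $\|u_\epsilon\|=r_0/2$; on $[\tau'_\epsilon,\tau_\epsilon]$ the trajectory lies in $\bar B_{r_0}\setminus B_{r_0/2}$, so $\|\nabla_x F\|\ge c_0 r_0/2$, and using $|\frac{d}{dt}\|u_\epsilon\||\le\|\dot u_\epsilon\|$ the path length is bounded below by $r_0/2$. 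This forces $D_\epsilon\ge c_0 r_0^2/4$ for $\epsilon$ small, contradicting the previous limit and proving $t^*\in J$.
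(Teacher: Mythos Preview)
Your overall architecture matches the paper's: an ``escape'' Claim~1 followed by an energy--dissipation argument, with minimality of $t^*$ coming from Proposition~\ref{propzero}. The energy step is essentially the paper's argument and is fine. The gap is in Claim~1.

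The Duhamel bootstrap you sketch does not close. With only the crude bound $\|u_\epsilon(s)\|\le r_0$ on $[0,t^*+\delta_\epsilon]$, the nonlinear source satisfies $|\langle B(s,u_\epsilon(s)),e_1\rangle|\le \eta_0\,\|u_\epsilon(s)\|\le \eta_0 r_0$, and the Duhamel correction picks up the \emph{same} amplification as the homogeneous term: on $[t^*,t^*+\delta_\epsilon]$,
\[
\frac{1}{\epsilon}\int_{t^*}^{t^*+\delta_\epsilon} e^{-(\Lambda(t^*+\delta_\epsilon)-\Lambda(s))/\epsilon}\,ds\ \sim\ \frac{1}{|\lambda_1(t^*)|}\,\epsilon^{-K|\lambda_1(t^*)|}\,,
\]
so the correction is of order $\eta_0 r_0\,\epsilon^{-K|\lambda_1(t^*)|}$, while the homogeneous term is of order $\epsilon^{\alpha}\,\epsilon^{-K|\lambda_1(t^*)|}$. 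Since $r_0$ is fixed and $\epsilon^\alpha\to 0$, the correction \emph{dominates} and can cancel the leading part; a prefactor ``small in $r_0$'' is irrelevant against one that vanishes like $\epsilon^\alpha$. What is missing is precisely the comparison $\|u_\epsilon(s)\|^2\le (1+\delta)\,|v_\epsilon(s)|^2$, i.e.\ that the orthogonal component stays dominated by the $e_1$-component throughout. The paper obtains this by introducing
\[
g_\epsilon(s)=|\langle u_\epsilon(s),e_1\rangle|^2-\tfrac{1}{1+\delta}\|u_\epsilon(s)\|^2
\]
and showing, via a first-hitting-time argument that exploits the spectral gap $\lambda^\perp(s)-\lambda_1(s)>0$ furnished by {\bf (A1)}, that $g_\epsilon(s)>0$ as long as $\|u_\epsilon(s)\|\le\mu$. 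Once this is in hand, the nonlinear remainder is bounded by $\eta\sqrt{1+\delta}\,|v_\epsilon|$ rather than by $\eta_0 r_0$, and a direct Gronwall on $|v_\epsilon|^2$ gives
\[
|v_\epsilon(\hat t)|^2\ \ge\ |v_\epsilon(0)|^2\exp\Bigl(-\tfrac{2}{\epsilon}\!\int_0^{\hat t}(\lambda_1(s)+\eta\sqrt{1+\delta})\,ds\Bigr)\ \longrightarrow\ +\infty
\]
for any fixed $\hat t>t^*$ close to $t^*$, contradicting $\|u_\epsilon\|\le\mu$. Your variation-of-parameters route could be salvaged, but only after first proving this $e_1$-dominance; without it the bootstrap fails.
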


\proof
We already know by Proposition \ref{propzero} that $u(t)=0$ for all $t\in [0,t^*)$,  thus we only have to show that
\begin{equation}
u_+(t^*)\neq0.
\label{limdest}
\end{equation}

As a first point, since Assumption {\bf{(A2)}} gives that ${\rm det}(A(t^*))\neq0$, from the Implicit Function Theorem there exists $\xi>0$ such that for all $(t,u)\in[t^*-\xi,t^*+\xi]\times B_\xi(0)$ one has
\begin{equation}\label{solozero}
\nabla_x F(t,u(t))=0 \iff u(t)=0.
\end{equation}
We now fix $\hat{t}\in [t^*, t^*+\xi]$ and we show that the set $\left([t^*,\hat{t}]\cap\{s:\,\|u(s)\|\geq \xi\}\right)\setminus J$ is nonempty. If so, since  $\lim_{s\to(t^*)^+}u(s)$ exists by Theorem~\ref{AR},  the assertion \eqref{limdest} follows from the arbitrariness of $\hat t$, and since $J$ is a null set. 
Without loss of generality, we may assume that $\hat t \notin J$.

For $e_1$ being the eigenvector in {\bf (A1)}, we set
\begin{equation}\label{perp}
u^1_\epsilon(t):=\langle u_\epsilon(t),e_1\rangle\quad\mbox{ and}\quad u^\perp_\epsilon(t):= u_\epsilon(t) - u^1_\epsilon(t) e_1.
\end{equation}
With \eqref{assume}, we may fix $\delta>0$ such that
\begin{align}\label{assume2}
(1+\delta)|u^1_\epsilon(0)|^2\ge \|u_\epsilon(0)\|^2
\end{align}
for all $\epsilon >0$. Moreover, we denote by $\lambda^\perp(s)$ the minimum eigenvalue of $A(s)$ restricted to the orthogonal space $e_1^\perp:=\{v:\,\langle v,e_1\rangle=0\}$. Since $\lambda_1(s)$ is simple for any $s$, we may find $\eta>0$ such that 
\begin{align}\label{scarto}
\lambda^\perp(s)-\lambda_1(s)>\eta\left(\frac{(1+\delta)^\frac32+(1+\delta)}{\delta}\right)
\end{align}
for all $s\in [0, \hat{t}]$, where $\delta$ is given by \eqref{assume2}. Moreover, since $\int_{0}^{t^*}\lambda_1(s)\,\mathrm{d}s=0$ and $\lambda_1(t^*)<0$, if $\hat{t}-t^*$ is small enough we have $\int_{0}^{\hat{t}}\lambda_1(s)\,\mathrm{d}s<0$. Up to choosing $\eta$ smaller  we may also assume that 
\begin{align}\label{scarto2}
\int_{0}^{\hat{t}}(\lambda_1(s)+\eta\sqrt{1+\delta})\,\mathrm{d}s<0\,.
\end{align}
Correspondingly, we fix $\mu>0$ such that $\mu\leq \xi$ and
\begin{equation}
\|B(t,u)\|\leq\eta\|u\|,\quad \text{if }\|u\|\leq \mu
\end{equation}
where $B(t,u)$ is defined as in \eqref{B}. Notice that $\eta$, and thus $\mu$, actually depend on the chosen $\hat t$, but we omit to explicitly stress this dependence for the ease of notation.

We now prove the following Claim.\\
{\bf Claim 1} \emph{For $\epsilon$ small enough, the set $
R_\epsilon:=\{s\in [0, \hat{t}]:\, \|u_\epsilon(s)\|>\mu\}
$
is nonempty.}

\smallskip

\emph{Proof of Claim 1.} We argue by contradiction and we assume that $R_\epsilon$ is empty; this means that $\|u_\epsilon(s)\|\leq\mu$ for all $s\in [0,\hat{t}]$. 

Then, we set
\begin{equation}
g_\epsilon(s):=|u_\epsilon^1(s)|^2-\frac{1}{1+\delta}\|u_\epsilon(s)\|^2,
\label{geps}
\end{equation} 
and prove that $g_\epsilon(s)>0$ for all $s\in[0, \hat{t}]$. Indeed, we have $g_\epsilon(0)>0$ by (\ref{assume}). Now, if the set $\{s:\, g_\epsilon(s)=0\}$ were nonempty, by compactness it would admit a minimum $s_\epsilon$. Then we would have $g_\epsilon(s_\epsilon)=0$ and $\dot{g}_\epsilon(s_\epsilon)\leq0$. Moreover, $u_\epsilon^1(s_\epsilon)\neq0$, otherwise $g_\epsilon(s_\epsilon)=0$ would imply $u_\epsilon(s_\epsilon)=0$, which is forbidden, since $u=0$ is a stationary solution of \eqref{eq1}.
Now, it holds
\begin{equation}
\begin{split}
\frac{\mathrm{d}}{\mathrm{d}s}|u_\epsilon^1(s)|^2&=2 u_\epsilon^1(s)\dot{u}_\epsilon^1(s)=2 u_\epsilon^1(s)\langle \dot{u}_\epsilon(s),e_1\rangle\\
&=2 u_\epsilon^1(s) \left\langle -\frac{1}{\epsilon}A(s)e_1,u_\epsilon(s)\right\rangle\\
&-\frac{2}{\epsilon}u_\epsilon^1(s) \langle B(s,u_\epsilon(s)),e_1\rangle\\
&\geq  -\frac{2}{\epsilon} \left[\lambda_1(s)|u_\epsilon^1(s)|^2+\eta\|u_\epsilon(s)\||u_\epsilon^1(s)|\right].
\end{split}
\label{stim}
\end{equation}
On the other hand,
\[
\begin{split}
-\frac{\mathrm{d}}{\mathrm{d}s}\frac{\|u_\epsilon(s)\|^2}{1+\delta}&=-\frac{2}{1+\delta}\langle u_\epsilon(s), \dot{u}_\epsilon(s)\rangle\\
&=\frac{2}{\epsilon(1+\delta)}\langle A(s) u_\epsilon(s), u_\epsilon(s)\rangle + \frac{2}{\epsilon(1+\delta)}\langle B(s,u_\epsilon(s)), u_\epsilon(s)\rangle\\
&\geq \frac{2}{\epsilon(1+\delta)}\lambda_1(s) |u_\epsilon^1(s)|^2+\frac{2}{\epsilon(1+\delta)}\lambda^\perp(s) \|u_\epsilon^\perp(s)\|^2-\frac{2\eta}{\epsilon(1+\delta)}\|u_\epsilon(s)\|^2.
\end{split}
\]
We then have
\[
\begin{split}
\frac{\mathrm{d}}{\mathrm{d}s}g_\epsilon(s)&\geq \frac{2}{\epsilon(1+\delta)}\lambda^\perp(s)\|u_\epsilon^\perp(s)\|^2-\frac{2\delta}{\epsilon(1+\delta)}\lambda_1(s)|u_\epsilon^1(s)|^2\\
&-\frac{2\eta}{\epsilon(1+\delta)}\left[\|u_\epsilon(s)\|^2+(1+\delta)\|u_\epsilon(s)\||u_\epsilon^1(s)|\right].
\end{split}
\]
From the definition of $s_\epsilon$, \eqref{perp}, and \eqref{geps}  we deduce that
\[
\frac{\mathrm{d}}{\mathrm{d}s}g_\epsilon(s)\geq \frac{2}{\epsilon}\frac{\delta}{1+\delta}\left[\lambda^\perp(s_\epsilon)-\lambda_1(s_\epsilon)-\eta\left(\frac{(1+\delta)^\frac32+(1+\delta)}{\delta}\right)\right]|u_\epsilon^1(s_\epsilon)|^2>0\,,
\]
where the last inequality follows by  \eqref{scarto}. This contradicts the existence of $s_\e$.

Thus,  $g_\epsilon(s)>0$ for all $s\in[0, \hat{t}]$, that is
\begin{equation}
\|u_\epsilon(s)\|^2<(1+\delta)|u_\epsilon^1(s)|^2\,.
\label{sameproof}
\end{equation}
Inserting into equation \eqref{stim} we have
\begin{equation}
\frac{\mathrm{d}}{\mathrm{d}s} |u_\epsilon^1(s)|^2\geq -\frac{2}{\epsilon}(\lambda_1(s)+\eta\sqrt{1+\delta})|u_\epsilon^1(s)|^2.
\end{equation}
By Gronwall's Lemma we finally deduce
\begin{equation}
|u_\epsilon^1(\hat{t})|^2\geq |u_\epsilon^1(0)|^2 \text{exp}\left({-\frac{2}{\epsilon}\int_{0}^{\hat{t}}(\lambda_1(s)+\eta\sqrt{1+\delta})\,\mathrm{d}s}\right)\,.
\end{equation}
The right-hand side is now unbounded as $\epsilon \to 0$, because of \eqref{decay}, \eqref{assume} and \eqref{scarto2}, thus giving a contradiction. This concludes the proof of Claim 1.

\smallskip

Going back to the proof of Theorem~\ref{jump},  we fix $\bar{t}<t^*$ arbitrarily. Proposition \ref{propzero} yields that $\|u_\epsilon(s)\|<\mu/2$ for all $s\in [0, \bar{t}]$ when $\epsilon$ is small enough. Combining with Claim 1, we then get that there exists ${t}^2_\epsilon\in[\bar{t},\hat{t}]$ such that $\|u_\epsilon({t}^2_\epsilon)\|=\mu$.  We can also find ${t}^1_\epsilon$ such that $\|u_\epsilon({t}^1_\epsilon)\|=\mu/2$ and $\mu/2\leq\|u_\epsilon({t})\|\leq\mu$ for any $t\in[{t}^1_\epsilon,{t}^2_\epsilon]$. Since $\mu\leq\xi$, \eqref{solozero} implies that
\[
0<G_\mu:=\min\left\{\|\nabla_x F(t,u)\|:\, t\in[\bar{t},\hat{t}], \, \mu/2\leq\|u\|\leq\mu\right\}\,.
\]
From the chain rule and \eqref{eq1} we have,
\begin{equation*}
\begin{split}
F(\hat{t}, u_\epsilon(\hat{t})) - F(\bar{t}, u_\epsilon(\bar{t}))&=\int_{\bar{t}}^{\hat{t}} \frac{\mathrm{d}}{\mathrm{d}s}F(s,u_\epsilon(s))\,\mathrm{d}s\\
&= \int_{\bar{t}}^{\hat{t}} \langle \dot{u}_\epsilon(s), \nabla_x F(s, u_\epsilon(s))\rangle\,\mathrm{d}s + \int_{\bar{t}}^{\hat{t}} \partial_s F(s, u_\epsilon(s))\,\mathrm{d}s\\
&= - \int_{\bar{t}}^{\hat{t}} \|\dot{u}_\epsilon(s)\|\,\|\nabla_x F(s,u_\epsilon(s))\|\,\mathrm{d}s + \int_{\bar{t}}^{\hat{t}}\partial_s F(s, u_\epsilon(s))\,\mathrm{d}s\,.
\end{split}
\end{equation*}
On the other hand,
\begin{equation*}
\begin{split}
\int_{\bar{t}}^{\hat{t}} \|\dot{u}_\epsilon(s)\|\,\|\nabla_x F(s,u_\epsilon(s))\|\,\mathrm{d}s &\geq \int_{{t}^1_\epsilon}^{{t}^2_\epsilon} \|\dot{u}_\epsilon(s)\|\,\|\nabla_x F(s,u_\epsilon(s))\|\,\mathrm{d}s\\
&\geq G_\mu \int_{{t}^1_\epsilon}^{{t}^2_\epsilon} \|\dot{u}_\epsilon(s)\|\,\mathrm{d}s\geq G_\mu \|{u}_\epsilon({t}^2_\epsilon)-u_\epsilon({t}^1_\epsilon)\|\\
&\geq \frac{\mu G_\mu}{2}.
\end{split}
\end{equation*}
From this we deduce that
\begin{equation*}
F(\hat{t}, u_\epsilon(\hat{t}))+\frac{\mu G_\mu}{2}\leq F(\bar{t}, u_\epsilon(\bar{t}))+\int_{\bar{t}}^{\hat{t}}\partial_s F(s, u_\epsilon(s))\,\mathrm{d}s,
\end{equation*}
and passing to the limit as $\epsilon\to0$ (note that $\mu$ does not depend on $\epsilon$), we get
\begin{equation}\label{bilancio}
F(\hat{t}, u(\hat{t}))+\frac{\mu G_\mu}{2}\leq F(\bar{t}, u(\bar{t}))+\int_{\bar{t}}^{\hat{t}}\partial_s F(s, u(s))\,\mathrm{d}s.
\end{equation}

Assume now by contradiction that $\left([t^*,\hat{t}]\cap\{s:\,\|u(s)\|\geq \xi\}\right)\setminus J$ is empty. Then, using (ii) in Theorem \ref{AR}, \eqref{solozero}, and since $J$ is a null set, we have  that $u(t)=0$ for almost every $t\in[t^*,\hat{t}]$. Since the function $u$ is continuous at $t=\hat{t}$, again (ii) in Theorem \ref{AR} and \eqref{solozero} entail that $u(\hat{t})=0$. We also know that $u(t)=0$ in $[\bar{t},t^*)$  by Proposition~\ref{propzero}. Inserting in \eqref{bilancio}, we get
\begin{equation*}
F(\hat{t},0)+\frac{\mu G_\mu}{2}\leq F(\bar{t},0)+\int_{\bar{t}}^{\hat{t}}\partial_s F(s, 0)\,\mathrm{d}s\,,
\end{equation*}
which is a contradiction, since $G_\mu>0$. This concludes the proof.
\endproof

\begin{remark}
Besides providing the necessary compactness according to Theorem \ref{AR}, the gradient flow structure of our system plays another important role in the above proof. Indeed, on the one hand proving that the trajectories move away from $0$ at some time $t_\e$ close to $t^*$, along the unstable direction $e_1$, follows by a local analysis, considering \eqref{eq1} as a small perturbation of the linear system $\e \dot u= A(t)u$. On the other hand, once the nonlinear terms become relevant, they could in  general push again, in an infinitesimal amount of time, the trajectories close to the trivial equilibrium along some stable direction. This can be excluded in the case of a gradient vector field, due to energetic reasons.
\end{remark}

\section{Behavior at the jump point}\label{behjump}
\subsection{General  results}
Concerning the limit behavior of the evolution close to the discontinuity point $t^*$, we now prove a result which is similar in spirit to \cite[Lemma 4.3]{Zanini}. Namely, if we blow up the time around $t^*$, the limit points of the rescaled functions $w_\epsilon(s):=u_\epsilon(t_\epsilon +\epsilon s)$, where $t_\e\to t^*$ is suitably chosen, are heteroclinic solutions of the autonomous gradient system
\begin{align}\label{heterosystem}
\dot w(s)=-\nabla_x F(t^*, w(s))\,,
\end{align}
originating from the equilibrium $w=0$ for $s\to -\infty$. Notice that, differently from the case considered in \cite{Zanini}, here there is no uniqueness of heteroclinic trajectories, and indeed our statement contains the existence of at least two distinct ones, depending on the sign of the component along the eigenspace corresponding to the minimal eigenvalue of $\nabla^2_x F(t^*, 0)$ . 
A slightly different, more general point of view is the one of \cite[Proposition 3.1]{Ago-Rossi}, where it is shown that the left and the right limits $u_-(t)$ and $u_+(t)$ of the limit evolution $u$ at a discontinuity point $t$ can be connected by a finite union of heteroclinic solutions of the system $\dot w(s)=-\nabla_x F(t, w(s))$. Such a result, however, does not contain the convergence property stated in Proposition \ref{hetero-prel} below, which can be used in some simple situations to exactly determine $u_+(t^*)$, as we are going to discuss in the next subsection.

\begin{prop}\label{hetero-prel}
Let the assumptions of Theorem \ref{jump} be given. Then, there exists a sequence $t_\e\to  t^*$ such that, setting $w_\epsilon(s):=u_\epsilon(t_\epsilon +\epsilon s)$,  $(w_\epsilon)_\epsilon$ has a subsequence converging uniformly on the compact subsets of $\mathbb{R}$ to a solution $w$ of the problem
\[
\begin{cases}
\dot{w}(s)=-\nabla_x F(t^*,w(s))\\
\displaystyle\lim_{s\to -\infty}w(s)=0\,.
\end{cases}
\]
Furthermore, for $e_1$ as in {\bf(A1)}, if $\langle u_\epsilon(0), e_1\rangle >0$ we have
\begin{equation}\label{weak-hetero1}
\liminf_{s\to-\infty}\frac{\langle w(s), e_1\rangle}{\|w(s)\|}>0\,.
\end{equation}
If instead $\langle u_\epsilon(0), e_1\rangle <0$, then we have
\begin{equation}\label{weak-hetero2}
\limsup_{s\to-\infty}\frac{\langle w(s), e_1\rangle}{\|w(s)\|}<0\,.
\end{equation}
\end{prop}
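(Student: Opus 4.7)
The plan is to define $t_\epsilon$ as the first time $\|u_\epsilon\|$ reaches a small fixed threshold $\mu$, to extract a subsequential limit $w$ of the rescaled functions by Arzel\`a--Ascoli, and then to combine the quantitative information coming from Claim 1 in the proof of Theorem~\ref{jump} with an $\alpha$-limit analysis of the autonomous gradient flow to obtain $\lim_{s\to-\infty}w(s)=0$. The sign statements will then follow from the sign-persistence of $u_\epsilon^1$ established along the way.

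For the choice of $t_\epsilon$, I would fix $\mu>0$ small enough that: (i) $\mu\leq\xi$ with $\xi$ as in \eqref{solozero}, so that $B_\mu(0)$ contains $0$ as the only critical point of $F(t,\cdot)$ for $t$ close to $t^*$; (ii) $\mu<\tfrac12\|u_+(t^*)\|$, which is positive by Theorem~\ref{jump}; (iii) the smallness conditions behind Claim 1 of Theorem~\ref{jump} hold, in particular $\|B(t,u)\|\leq\eta\|u\|$ for $\|u\|\leq\mu$ with suitable $\eta,\delta>0$. Setting $t_\epsilon:=\inf\{t\in[0,T]:\ \|u_\epsilon(t)\|=\mu\}$, Proposition~\ref{propzero} forces $\liminf_\epsilon t_\epsilon\geq t^*$; on the other hand, picking any $t'\in(t^*,t^*+\gamma)\setminus J$ (such $t'$ exists because $J$ is countable) and using Theorem~\ref{AR}(iii) gives $\|u_\epsilon(t')\|>\mu$ for $\epsilon$ small, so $\limsup_\epsilon t_\epsilon\leq t'$; letting $\gamma\to 0$ yields $t_\epsilon\to t^*$. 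Since $u_\epsilon$ is equi-bounded on $[0,T]$ and $F\in C^2$, both $w_\epsilon$ and $\dot w_\epsilon$ are equi-bounded on every compact subset of $\mathbb{R}$; by Arzel\`a--Ascoli and a diagonal argument, a subsequence (not relabelled) satisfies $w_\epsilon\to w$ uniformly on compact subsets of $\mathbb{R}$, with $\|w(0)\|=\mu>0$ and $\dot w=-\nabla_xF(t^*,w)$ on $\mathbb{R}$.

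The main obstacle is the asymptotics at $-\infty$. My plan is to produce, for every $\nu\in(0,\mu)$, a point $-M(\nu)$ at which $\|w(-M(\nu))\|\leq\nu$, with $M(\nu)\to+\infty$ as $\nu\to 0$. Since $\|w_\epsilon(s)\|\leq\mu$ for every $s\leq 0$ (by definition of $t_\epsilon$), the argument of Claim 1 of Theorem~\ref{jump} applies on $[-t_\epsilon/\epsilon,0]$ and yields $|w_\epsilon^1(s)|\geq \|w_\epsilon(s)\|/\sqrt{1+\delta}$, with $w_\epsilon^1$ of constant sign equal to $\mathrm{sign}\langle u_\epsilon(0),e_1\rangle$. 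Assuming this sign is positive (the other case is symmetric), assumption~(A1) turns the equation for $w_\epsilon^1$ into a scalar one up to the perturbative term $\langle B,e_1\rangle$; combining $\lambda_1(t^*)<0$, $\|B(\cdot,u)\|\leq\eta\|u\|$, and the previous bound on $\|w_\epsilon\|/|w_\epsilon^1|$ gives $\dot w_\epsilon^1(s)\geq c\,w_\epsilon^1(s)$ on each bounded $[-M,0]$, with $c:=|\lambda_1(t^*)|/2$, provided $\epsilon$ and $\eta$ are small enough. Gronwall backwards from $s=0$ together with $|w_\epsilon^1(0)|\leq\mu$ yield $\|w_\epsilon(-M)\|\leq\sqrt{1+\delta}\,\mu\,e^{-cM}$; choosing $M=M(\nu):=c^{-1}\log(\sqrt{1+\delta}\mu/\nu)$ and passing to the limit pointwise at $-M(\nu)$ gives $\|w(-M(\nu))\|\leq\nu$. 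Taking $\nu_n\to 0$, the points $s_n:=-M(\nu_n)\to-\infty$ satisfy $w(s_n)\to 0$, so $0\in\alpha(w)$. Since $w$ is bounded on $\mathbb{R}$ and solves an autonomous gradient flow with $C^1$ right-hand side, $\alpha(w)$ is nonempty, compact, connected, invariant, and contained in the critical set of $F(t^*,\cdot)$, which is discrete by~(F3); hence $\alpha(w)$ is a singleton, forced to be $\{0\}$, so $w(s)\to 0$ as $s\to-\infty$.

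For the sign statements, the same inequality $|\langle w_\epsilon(s),e_1\rangle|\geq \|w_\epsilon(s)\|/\sqrt{1+\delta}$ together with the persistence of $\mathrm{sign}\,w_\epsilon^1=\mathrm{sign}\langle u_\epsilon(0),e_1\rangle$ passes to the limit on compact sets, so $\langle w(s),e_1\rangle/\|w(s)\|\geq 1/\sqrt{1+\delta}>0$ for every $s\leq 0$ when $\langle u_\epsilon(0),e_1\rangle>0$, which yields \eqref{weak-hetero1}; the opposite sign case is symmetric and gives \eqref{weak-hetero2}. I expect the exponential-growth step to be the main difficulty: it is there that assumption~(A1) enters crucially to decouple the evolution of $w_\epsilon^1$, and where the quantitative backward-in-time decay of $w_\epsilon$ must be strong enough to survive the $\epsilon\to 0$ limit; the final $\alpha$-limit argument, in turn, hinges on $0$ being isolated among the critical points of $F(t^*,\cdot)$ via (A2) and (F3).
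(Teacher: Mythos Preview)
Your proof is correct and follows essentially the same route as the paper: same choice of $t_\epsilon$, same Arzel\`a--Ascoli argument, same use of the Claim~1 inequality $\|w_\epsilon(s)\|^2\le(1+\delta)|w_\epsilon^1(s)|^2$ on $(-t_\epsilon/\epsilon,0]$ to get both the sign conclusions and the decay at $-\infty$. The only difference is in how you establish $\lim_{s\to-\infty}w(s)=0$: you run the differential inequality $\dot w_\epsilon^1\ge c\,w_\epsilon^1$ at the $\epsilon$-level on each $[-M,0]$, pass to the limit, and then invoke $\alpha$-limit set theory, whereas the paper first passes the inequality $\|w(s)\|^2\le(1+\delta)|w^1(s)|^2$ to the limit and then runs the differential inequality $\dot w^1\ge\tfrac12|\lambda_1(t^*)|\,w^1$ directly for the autonomous limit equation, which gives $w^1(s)\to 0$ (and hence $\|w(s)\|\to 0$) without any $\alpha$-limit argument---your detour is harmless but unnecessary, since your own bound $\|w(-M)\|\le\sqrt{1+\delta}\,\mu\,e^{-cM}$ already holds for \emph{every} $M>0$ and thus yields the full limit directly.
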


\begin{proof}
Using {\bf (A1)} and {\bf(A2)} we may fix $\theta >0$ with
\begin{equation}
\theta\leq\frac{|\lambda_1(t^*)|}{2\sqrt{1+\delta}}\quad \hbox{and}\quad \theta<\frac{\delta}{(1+\delta)^\frac32+(1+\delta)}(\lambda_i(t)-\lambda_1(t))
\label{ipo1}
\end{equation}
for all $t\in[0,t^*+\rho]$ and $i\geq2$, where $\delta$ is given by \eqref{assume2}. Correspondingly, we may find $\mu<\|u_+(t^*)\|$ such that, for $B(t,u)$ defined as in \eqref{B}, it holds
\begin{align}\label{B_stim}
\|B(t,u)\|\leq\theta\|u\|
\end{align}
for all $u\in X$ with $ \|u\|\le\mu$.

We now set 
\begin{equation}
t_\epsilon:=\min\{t\in[0,T]:\, \|u_\epsilon(t)\|=\mu\}\,.
\label{tepsilon}
\end{equation}   
From Proposition \ref{propzero} we have $\liminf_{\epsilon \to 0} t_\epsilon \ge t^*$. Since $\mu<\|u_+(t^*)\|$ and with the pointwise convergence of $u_\epsilon(t)$ to $u(t)$, we have that  $\limsup_{\epsilon \to 0} t_\epsilon \le t^*$. Therefore, $t_\epsilon\to t^*$ as $\epsilon\to0$. Moreover, since $\|u_\epsilon(t)\|\leq \mu$ in $[0, t_\epsilon]$, using \eqref{ipo1}, \eqref{B_stim} and arguing as in the proof of (\ref{sameproof}), we get
\begin{equation}
\|u_\epsilon(t)\|^2<(1+\delta)|\langle u_\epsilon(t), e_1\rangle|^2
\label{stimaI}
\end{equation}
for all $t\in[0,t_\epsilon]$.

We define
$
w_\epsilon(s):=u_\epsilon(t_\epsilon+\epsilon s),
$
and we note that $w_\epsilon(s)$ solves the problem
\begin{equation}\label{eps-probl}
\begin{cases}
\dot{w}_\epsilon(s)=-\nabla_x F(t_\epsilon+\epsilon s, w_\epsilon(s))\\
w_\epsilon(0)=u_\epsilon(t_\epsilon).
\end{cases}
\end{equation}
Furthermore, by the definition of $t_\epsilon$ and with \eqref{stimaI} we have
\begin{align}\label{stimeII}
\|w_\epsilon(s)\|\le \mu\quad\mbox{and}\quad \|w_\epsilon(s)\|^2<(1+\delta)|\langle w_\epsilon(s), e_1\rangle|^2
\end{align}
for all $s\in \left[-\frac{t_\epsilon}\epsilon, 0\right]$. In particular we deduce
\begin{align}\label{segnocostante}
\langle u_\epsilon(0), e_1\rangle >0 \Longrightarrow \langle w_\epsilon(s), e_1\rangle >0\quad\mbox{and}\quad\langle u_\epsilon(0), e_1\rangle <0 \Longrightarrow \langle w_\epsilon(s), e_1\rangle <0\,,
\end{align}
respectively, for all $s\in \left[-\frac{t_\epsilon}\epsilon, 0\right]$.
Since $w_\e(s)$ is equibounded, from {\bf(F0)} and the Ascoli-Arzel\`a Theorem $w_\epsilon(s)$ converges, up to a subsequence, pointwise as $\epsilon\to0$ (and uniformly on the compact subsets of $\mathbb{R}$) to the solution to
\[
\begin{cases}
\dot{w}(s)=-\nabla_xF(t^*,w(s))\\
w(0)=w_0\,,
\end{cases}
\]
where $w_0$ is a limit point of $w_\epsilon(0)$. Since by construction $\|w_\epsilon(0)\|=\mu$, we have $w_0\neq 0$ and by uniqueness $w(s)\neq 0$ for all $s\in \mathbb{R}$. Moreover, \eqref{stimeII} gives
\begin{align}\label{stimeIII}
\|w(s)\|\le \mu\quad\mbox{and}\quad \|w(s)\|^2\le(1+\delta)|\langle w(s), e_1\rangle|^2
\end{align}
for all negative times $s\in (-\infty, 0]$.
Since $w(s)\neq 0$, from the second inequality above and \eqref{segnocostante} we immediately deduce \eqref{weak-hetero1} and \eqref{weak-hetero2}.

We are left to show that $\lim_{s\to -\infty}w(s)=0$. We set $w^1(s):=\langle w(s), e_1\rangle$, and we only consider the case where $w^1(s)>0$ for all $s \le 0$, the other one being similar.
For any $s\leq0$, from \eqref{B_stim} and \eqref{stimeIII} we get
\[
\begin{split}
\dot{w}^1(s)&=-\lambda_1(t^*)w^1(s)+\langle B(t,w(s)), e_1\rangle=|\lambda_1(t^*)|w^1(s)+\langle B(t,w(s)), e_1\rangle\\
                  &\geq |\lambda_1(t^*)|w^1(s)-\theta\|w(s)\|\geq |\lambda_1(t^*)|w^1(s)-\frac{|\lambda_1(t^*)|}{2\sqrt{1+\delta}}\|w(s)\|\\
                  &\geq \frac12|\lambda_1(t^*)|w^1(s)\,.
\end{split}
\]
This implies that $\displaystyle\lim_{s\to-\infty}w^1(s)=0$, which combined with \eqref{stimeIII} finally gives
$
\displaystyle\lim_{s\to-\infty}\|w(s)\|=0,
$
as desired.

\end{proof}

\subsection{The case of a single negative eigenvalue}\label{single}
A simple situation, where Proposition \ref{hetero-prel} can be used to determine the right limit $u_+(t^*)$, is the one where the Hessian matrix $A(t^*)$ has only one negative eigenvalue, namely the minimal one $\lambda_1(t^*)$. In this case, indeed, there are exactly two solutions (up to time-translations) to \eqref{heterosystem}, each leaving $w=0$ for $s\to -\infty$ from a different side of the one-dimensional unstable manifold for $w=0$, which has ${\rm span}(e_1)$ as tangent space at $0$.  With this and Proposition \ref{hetero-prel},  the sign of  $\langle u_\epsilon(0), e_1\rangle$ uniquely determines  the limit $w(s)$ of the rescaled trajectories $w_\epsilon(s)$ introduced in Proposition \ref{hetero-prel}. The corresponding $\omega$-limit point is then a good candidate for being $u_+(t^*)$, and we are going to show that this is the case, provided it is a {\it strong local minimizer} of the energy.

Before giving precise statements, we recall the following multiplicity result, which is a direct consequence of the Stable Manifold Theorem (see, e.g.\ \cite[Theorems 9.3 and 9.5]{Tes}). 
\begin{prop}\label{hetero}
Consider $F\in C^2([0,T]\times X)$ satisfying {\bf (F1)}, and assume that \eqref{eq2} holds. For a given $t^* \in [0,T]$, set $A(t^*):=\nabla^2_x F(t^*, 0)$ and assume that its ordered eigenvalues $\lambda_1(t^*),\dots, \lambda_n(t^*)$ satisfy
\begin{align}\label{autovalo}
\lambda_1(t^*)<0<\lambda_2(t^*)\le\dots\le\lambda_n(t^*)\,.
\end{align}
Let $e_1$ be a fixed unitary eigenvector corresponding to the minimal eigenvalue $\lambda_1(t^*)$. Then there exist exactly two (up to time-translations) solutions $\underline{w}$ and $\overline{w}$ to the problem
\[
\begin{cases}
\dot{w}(s)=-\nabla_xF(t^*,w(s))\\
\displaystyle\lim_{s\to -\infty}w(s)=0
\end{cases}
\]
which satisfy
\begin{equation}\label{uniq}
\displaystyle
\lim_{s\to -\infty}\frac{\langle\overline{w}(s), e_1\rangle}{\|\overline{w}(s)\|}=1,\quad\mbox{and}\quad\lim_{s\to -\infty}\frac{\langle\underline{w}(s), e_1\rangle}{\|\underline{w}(s)\|}=-1\,,
\end{equation}
respectively.
\end{prop}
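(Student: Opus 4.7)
The vector field $w\mapsto -\nabla_x F(t^*, w)$ is $C^1$ by {\bf (F0)}, with $w=0$ an equilibrium by \eqref{eq2}. Its Jacobian at $0$ is $-A(t^*)$, whose eigenvalues are $-\lambda_1(t^*)>0$ and $-\lambda_i(t^*)<0$ for $i\geq 2$ thanks to \eqref{autovalo}, the single unstable direction being $\mathrm{span}(e_1)$. The cornerstone of the argument is the Stable/Unstable Manifold Theorem applied at $0$, which I would invoke in the form of \cite[Theorems 9.3 and 9.5]{Tes}. This produces a one-dimensional local unstable manifold $W^u_{\mathrm{loc}}$, of class $C^1$, tangent at $0$ to $\mathrm{span}(e_1)$, together with an $(n-1)$-dimensional local stable manifold $W^s_{\mathrm{loc}}$; moreover, trajectories lying on $W^u_{\mathrm{loc}}$ converge to $0$ as $s\to-\infty$ at an exponential rate controlled by $|\lambda_1(t^*)|$.

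Next I would show that every solution $w$ of \eqref{heterosystem} with $\lim_{s\to-\infty}w(s)=0$ necessarily has its trajectory lying on the global unstable manifold of $0$, and in particular enters $W^u_{\mathrm{loc}}$ for sufficiently negative $s$. This is the content of the exponential dichotomy: any initial condition in a small neighborhood of $0$ that has a nonzero component in the stable directions of the linearization gets expelled exponentially fast when run backward in time, contradicting the convergence to $0$. Since $W^u_{\mathrm{loc}}\setminus\{0\}$ is a $C^1$ one-dimensional manifold tangent to $\mathrm{span}(e_1)$, it consists of exactly two connected arcs $W^u_+$ and $W^u_-$, and each arc is an invariant orbit of the flow (with backward limit $0$). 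By uniqueness of the Cauchy problem for the ODE, any two heteroclinic solutions whose trajectories lie on the same arc are time-translates of each other. Thus, up to translation, there are exactly two heteroclinic orbits, which I denote by $\overline w$ (whose trajectory lies on $W^u_+$) and $\underline w$ (on $W^u_-$).

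To verify \eqref{uniq}, I would use the tangency of $W^u_{\mathrm{loc}}$ to $\mathrm{span}(e_1)$ at $0$: a local $C^1$ parametrization $r\mapsto \gamma(r)$ of the arc $W^u_+$ with $\gamma(0)=0$ and $\gamma'(0)=e_1$ yields $\gamma(r)/\|\gamma(r)\|\to e_1$ as $r\to 0^+$, and the corresponding statement for $W^u_-$ with $-e_1$. Since $\overline w(s)\to 0$ along $W^u_+$ as $s\to-\infty$, one gets $\overline w(s)/\|\overline w(s)\|\to e_1$, and analogously for $\underline w$. The main (rather mild) point requiring care is the second step above, namely that no heteroclinic solution can fail to enter $W^u_{\mathrm{loc}}$; everything else is a direct packaging of the Stable Manifold Theorem and ODE uniqueness.
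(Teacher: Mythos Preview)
Your proposal is correct and follows exactly the approach indicated in the paper, which presents the proposition as ``a direct consequence of the Stable Manifold Theorem'' and cites the same references \cite[Theorems 9.3 and 9.5]{Tes}; you have simply spelled out the details that the paper leaves implicit, namely that the one-dimensional unstable manifold tangent to $\mathrm{span}(e_1)$ captures all backward-converging trajectories and splits into two arcs, each carrying a single orbit up to time-translation with the required asymptotic direction.
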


Combining Propositions \ref{hetero-prel} and \ref{hetero}, we get the following Lemma. In the statement, for the two heteroclinic trajectories $\underline{w}$ and $\overline{w}$ given by Proposition \ref{hetero}, we denote with $\underline{u}^*$, and let $\overline{u}^*$ their respective $\omega$-limit points, that is
\begin{equation}\label{omega}
\underline{u}^*=\lim_{s\to +\infty} \underline{w}(s)\quad\mbox{and}\quad\overline{u}^*=\lim_{s\to +\infty} \overline{w}(s)\,.
\end{equation}
The limit points  $\underline{u}^*$ and $\overline{u}^*$ are indeed uniquely determined, since the $\omega$-limit set of bounded orbits of a gradient flow  is a connected subset of the set of stationary points (see, e.g. \cite[Lemma 6.6]{Tes} and \cite[Theorem 14.17]{Hale}) and we are assuming that they are isolated by {\bf (F3)}.

\begin{lem}
Assume {\bf (F0)-(F3)}, as well as \eqref{eq2}. Let $(u_\epsilon)_\e$ be a sequence of solutions to (\ref{eq1}), with initial data $u_\epsilon(0)$ as in \eqref{decay}. 
Assume that \eqref{ipotesi_fond}, {\bf(A1)},  {\bf(A2)}, \eqref{assume} and \eqref{autovalo} hold, for $t^*$ as in \eqref{tstar} . Fix a unitary eigenvector $e_1$ of the minimal eigenvalue $\lambda_1(t^*)$, and $\underline{u}^*$, $\overline{u}^*$ as in \eqref{omega}.
Then:
\begin{itemize}
\item
if $\langle u_\epsilon(0), e_1\rangle >0$, for any fixed $\eta>0$  there exists a sequence $\{t_\epsilon^\eta\}$ such that $t_\epsilon^\eta \to t^*$ as $\epsilon\to0$ and
\begin{equation}
\liminf_{\epsilon\to 0}\|u_\epsilon(t_\epsilon^\eta)-\overline{u}^*\|<\eta\,;
\label{asserto}
\end{equation}
\item if $\langle u_\epsilon(0), e_1\rangle <0$ an analogous statement holds with $\underline{u}^*$ in place of $\overline{u}^*$.
\end{itemize}
\end{lem}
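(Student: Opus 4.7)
The plan is to extract a heteroclinic limit via Proposition \ref{hetero-prel}, identify it (up to time translation) with $\overline{w}$ using Proposition \ref{hetero} together with the single-negative-eigenvalue assumption \eqref{autovalo}, and finally pass from the $\omega$-limit of the heteroclinic back to the original trajectories by a uniform convergence argument.

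First, I would apply Proposition \ref{hetero-prel} to obtain a sequence $t_\epsilon \to t^*$ and a solution $w$ of \eqref{heterosystem} with $\lim_{s\to-\infty} w(s) = 0$ such that, up to a subsequence, the rescaled functions $w_\epsilon(s) := u_\epsilon(t_\epsilon + \epsilon s)$ converge to $w$ uniformly on compact subsets of $\mathbb{R}$. In the case $\langle u_\epsilon(0), e_1\rangle > 0$, the same proposition yields the sign condition \eqref{weak-hetero1}, namely $\liminf_{s\to-\infty} \langle w(s), e_1\rangle/\|w(s)\| > 0$.

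Next I would invoke Proposition \ref{hetero}: under \eqref{autovalo}, the only heteroclinics of \eqref{heterosystem} originating from $0$, modulo time translations, are $\overline{w}$ and $\underline{w}$, distinguished by the rigid limits \eqref{uniq}. Hence $w$ must be a time translate of exactly one of them. A translate of $\underline{w}$ would force $\lim_{s\to-\infty}\langle w(s), e_1\rangle/\|w(s)\| = -1$, contradicting \eqref{weak-hetero1}; therefore $w(\cdot) = \overline{w}(\cdot + c)$ for some $c \in \mathbb{R}$, and in particular $\lim_{s\to+\infty} w(s) = \overline{u}^*$. Given $\eta > 0$, I would fix $S > 0$ with $\|w(S) - \overline{u}^*\| < \eta/2$; uniform convergence of $w_\epsilon$ to $w$ on $[0, S]$ then yields $\|u_\epsilon(t_\epsilon + \epsilon S) - \overline{u}^*\| < \eta$ for all sufficiently small $\epsilon$ along the extracted subsequence. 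Setting $t_\epsilon^\eta := t_\epsilon + \epsilon S$, which still satisfies $t_\epsilon^\eta \to t^*$, establishes \eqref{asserto}. The case $\langle u_\epsilon(0), e_1\rangle < 0$ is entirely symmetric, using \eqref{weak-hetero2} and $\underline{w}$ in place of \eqref{weak-hetero1} and $\overline{w}$.

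The main delicate point is the identification step. Here the assumption \eqref{autovalo} is essential: it makes the unstable manifold at $0$ one-dimensional and tangent to the direction generated by $e_1$, which is precisely what forces the rigid dichotomy $\pm 1$ in \eqref{uniq} and allows the sign of $\langle u_\epsilon(0), e_1\rangle$ to pin down the correct $\omega$-limit. Once this is in place, no further estimate on the original system \eqref{eq1} is needed beyond what is already encoded in Propositions \ref{hetero-prel} and \ref{hetero}.
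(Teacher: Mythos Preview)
Your proposal is correct and follows essentially the same route as the paper's own proof: apply Proposition~\ref{hetero-prel} to get the rescaled limit $w$, use Proposition~\ref{hetero} together with the sign condition \eqref{weak-hetero1} to identify $w$ as a time translate of $\overline{w}$, then pick a large time $S$ where $w(S)$ is $\eta$-close to $\overline{u}^*$ and set $t_\epsilon^\eta=t_\epsilon+\epsilon S$. The only cosmetic differences are that you spell out the exclusion of $\underline{w}$ and use $\eta/2$ in the triangle-inequality step, whereas the paper simply says ``assuming without loss of generality that $\overline{w}$ is the limit'' and works directly with $\eta$.
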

\begin{proof}
We only consider the case $\langle u_\epsilon(0), e_1\rangle >0$. For $t_\epsilon$ as in \eqref{tepsilon}, from Propositions \ref{hetero-prel} and \ref{hetero}, using \eqref{weak-hetero1} and the first equality in \eqref{uniq}, we get that (at least along a subsequence), the functions $s\mapsto u_\epsilon(t_\epsilon+\epsilon s)$ converge, uniformly on the compact subsets of $\mathbb{R}$ to the function $\overline{w}(s)$, or a time-translation thereof. Assuming without loss of generality that $\overline{w}(s)$ is the limit, for any fixed $\eta>0$, we know that there exists  $\bar{s}>0$ such that
\[
\|\overline{w}(\bar{s})-\overline{u}^*\|<\eta,
\]
which implies
\[
\liminf_{\epsilon\to0}\|u_\epsilon(t_\epsilon+\epsilon\bar{s})-\overline{u}^*\|<\eta
\]
by the previously stated convergence. The assertion follows choosing $t_\epsilon^\eta=t_\epsilon+\epsilon\bar{s}$, since $t_\epsilon \to t^*$ by Proposition \ref{hetero-prel}.
\end{proof}

We can now state and prove the announced result, which allows to identify $u_+(t^*)$ under the assumptions \eqref{autovalo} and \eqref{locmin1} (or \eqref{locmin2}) below.

\begin{thm}\label{jump2}
Assume {\bf (F0)-(F3)}, as well as \eqref{eq2}. Let $(u_\epsilon)_\e$ be a sequence of solutions to (\ref{eq1}), with initial data $u_\epsilon(0)$ as in \eqref{decay}.  Let $u:[0,T]\to X$ be the pointwise limit  of $(u_\epsilon)_\epsilon$, as given by Theorem \ref{AR}. 
Assume that \eqref{ipotesi_fond}, {\bf(A1)},  {\bf(A2)}, \eqref{assume}, and \eqref{autovalo} hold, for $t^*$ as in \eqref{tstar} . Fix a unitary eigenvector $e_1$ of the minimal eigenvalue $\lambda_1(t^*)$, and $\underline{u}^*$, $\overline{u}^*$ as in \eqref{omega}.
Then:
\begin{itemize}
\item if 
\begin{equation}\label{locmin1}
\nabla^2_x F(t^*, \overline{u}^*) >0\quad\mbox{and}\quad \langle u_\epsilon(0), e_1\rangle >0\,,
\end{equation}
it holds $u_+(t^*)=\overline{u}^*$. Furthermore, $u$ is of class $C^2$ in a right neighborhood of $t^*$.
\item if 
\begin{equation}\label{locmin2}
\nabla^2_x F(t^*, \underline{u}^*) >0\quad\mbox{and}\quad \langle u_\epsilon(0), e_1\rangle <0\,,
\end{equation}
it holds $u_+(t^*)=\underline{u}^*$. Furthermore, $u$ is of class $C^2$ in a right neighborhood of $t^*$.
\end{itemize}
\end{thm}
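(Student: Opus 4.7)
The plan is to combine the preceding Lemma, which pushes $u_\epsilon$ arbitrarily close to $\overline{u}^*$ at some time $t_\epsilon^\eta \to t^*$, with a local stability argument around the strict local minimizer $\overline{u}^*$. I focus on the first case; the second is symmetric. Since $\nabla^2_x F(t^*,\overline{u}^*)>0$ is invertible, the Implicit Function Theorem applied to $\nabla_x F(t,u)=0$ at $(t^*,\overline{u}^*)$ yields $\delta>0$ and a regular curve of critical points $u^*:[t^*,t^*+\delta]\to X$ with $u^*(t^*)=\overline{u}^*$, $\nabla^2_x F(t,u^*(t))$ uniformly positive definite with minimal eigenvalue $\geq c_0>0$, and with $u^*(t)$ being the unique critical point of $F(t,\cdot)$ in a uniform neighborhood.

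Next I would prove the key \emph{stability estimate}: if $u_\epsilon(t_0)$ is $\eta$-close to $u^*(t_0)$ for some $t_0$ close to $t^*$, with $\eta$ small, then $u_\epsilon(t)$ remains order-$\eta$ close to $u^*(t)$ throughout $[t_0, t^*+\delta]$. Setting $z_\epsilon(t):=u_\epsilon(t)-u^*(t)$ and using $\nabla_x F(t,u^*(t))\equiv 0$, I compute
\[
\frac{\epsilon}{2}\frac{\mathrm{d}}{\mathrm{d}t}\|z_\epsilon\|^2 = -\langle \nabla_x F(t,u_\epsilon)-\nabla_x F(t,u^*), z_\epsilon\rangle - \epsilon\langle \dot{u}^*, z_\epsilon\rangle.
\]
A Taylor expansion around $u^*(t)$ together with $\nabla^2_x F(t,u^*(t))\geq c_0$ yields, for $\|z_\epsilon\|\leq r$ with some $r>0$,
\[
\langle \nabla_x F(t,u_\epsilon)-\nabla_x F(t,u^*), z_\epsilon\rangle \geq c_0\|z_\epsilon\|^2-K\|z_\epsilon\|^3.
\]
Substituting, with $M:=\sup_{[t^*,t^*+\delta]}\|\dot u^*\|$, leads to $\frac{\mathrm{d}}{\mathrm{d}t}\|z_\epsilon\|^2 \leq -(c_0/\epsilon)\|z_\epsilon\|^2 + 2M\|z_\epsilon\|$ as long as $\|z_\epsilon\|$ stays below the threshold. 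A standard invariant-region (or Gronwall-type) argument then gives $\|z_\epsilon(t)\|\leq \max\{\|z_\epsilon(t_0)\|,\, 2M\epsilon/c_0\}$.

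Finally, by the preceding Lemma, for every sufficiently small $\eta>0$ there exist a subsequence $\epsilon_k\to 0$ and times $t_{\epsilon_k}\to t^*$ with $\|u_{\epsilon_k}(t_{\epsilon_k})-\overline{u}^*\|<\eta$, and by continuity of $u^*$ also $\|z_{\epsilon_k}(t_{\epsilon_k})\|<2\eta$. The stability estimate then provides $\|z_{\epsilon_k}(t)\|<2\eta$ on $[t_{\epsilon_k}, t^*+\delta]$ for $\epsilon_k$ small. Passing to the pointwise limit $\epsilon_k\to 0$ via Theorem \ref{AR} and letting $\eta\to 0$, I conclude $u(t)=u^*(t)$ a.e.\ in $(t^*, t^*+\delta]$; by continuity of $u^*$ and uniqueness of the critical point near $u^*(t)$, this extends pointwise to the whole interval, so $u_+(t^*)=\overline{u}^*$ and the regularity of $u$ in a right neighborhood of $t^*$ follows from that of the Implicit Function Theorem branch $u^*$. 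The main obstacle is the differential inequality for $\|z_\epsilon\|^2$: the cubic Taylor error must be kept subordinate to the strong linear contraction at rate $c_0/\epsilon$, which forces an a-priori restriction on $\eta$; once this is arranged, the $O(\epsilon)$ transport term $2M\|z_\epsilon\|$ is harmless, since relaxation is much faster than the drift of the equilibrium branch.
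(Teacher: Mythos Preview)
Your approach is essentially the same as the paper's: invoke the preceding Lemma to land close to $\overline{u}^*$ at some $t_\epsilon^\eta\to t^*$, then run a local contraction argument along the Implicit Function Theorem branch $u^*(t)$ (the paper calls it $\varphi(t)$), deriving a differential inequality for $\|u_\epsilon-u^*\|^2$ and showing the solution cannot exit a small tube before $t^*+\sigma$.

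One technical point is worth flagging. You estimate the quadratic term via a Taylor expansion with a cubic remainder $K\|z_\epsilon\|^3$, which tacitly requires $F\in C^3$; the standing hypothesis \textbf{(F0)} only gives $F\in C^2$. The paper avoids this by using continuity of $\nabla^2_x F$ to obtain a uniform lower bound $\langle \nabla^2_x F(t,\psi)v,v\rangle\ge\lambda\|v\|^2$ for all $\psi$ in a fixed neighborhood of $\varphi(t)$, and then applying the scalar mean value theorem to $\tau\mapsto\langle\nabla_x F(t,\varphi(t)+\tau z_\epsilon),z_\epsilon\rangle$. This yields directly $\langle\nabla_x F(t,u_\epsilon)-\nabla_x F(t,\varphi),z_\epsilon\rangle\ge\lambda\|z_\epsilon\|^2$ without any higher-order error, so the cubic term and the associated ``a-priori restriction on $\eta$'' you worry about simply do not arise. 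With this fix your argument goes through under the stated regularity, and the remaining steps (subsequence extraction from the $\liminf$ in the Lemma, the exit-time contradiction, identification of $u$ with $u^*$ on $(t^*,t^*+\sigma]$) match the paper's proof.
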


\proof  
We prove the statement under Assumption \eqref{locmin1}, the proof of the other case being totally analogous.

Since $\nabla_x F(t^*,\overline{u}^*)=0$, from the Implicit Function Theorem and \eqref{locmin1} there exists a function $\varphi(t)\in C^2([t^*-\sigma,t^*+\sigma])$, with $\varphi(t^*)=\overline{u}^*$ such that $\nabla_x F(t,\varphi(t))=0$, and  $\nabla^2_x F(t,\varphi(t))>0$ for any $t\in[t^*-\sigma,t^*+\sigma]$.  In particular, there exist positive constants $\delta$ and $\lambda$ such that
\begin{align}\label{posdef}
\langle \nabla_x^2 F(t,\psi) v, v\rangle\geq \lambda\|v\|^2
\end{align}
for all $t\in [t^*-\sigma,t^*+\sigma]$ and all $\psi \in X$ with $\|\psi- \varphi(t)\|\le \delta$.
We now fix an arbitrary $0<\eta<\delta$. Then, \eqref{asserto} provides a sequence $t^\eta_\epsilon\to t^*$ such that, for $\epsilon$ small enough,
\begin{equation}
\|u_\epsilon(t^\eta_\epsilon)-\varphi(t^*)\|=\|u_\epsilon(t^\eta_\epsilon)-u^*\|<\eta.
\label{smallnorm}
\end{equation}
Up to taking a subsequence we may indeed suppose that the $\liminf$ in \eqref{asserto} is a limit. 

We now set $v_\epsilon(t):=u_\epsilon(t)-\varphi(t)$. For every $t\in[t^\eta_\epsilon,t^*+\sigma]$, since $\nabla_x F(t,\varphi(t))=0$, it holds
\begin{equation}\label{veps}
\epsilon \dot{v}_\epsilon(t) = \epsilon(\dot{u}_\epsilon(t)-\dot{\varphi}(t))= -\nabla_x F(t,u_\epsilon(t)) + \nabla_x F(t,\varphi(t)) -\epsilon\dot{\varphi}(t)\,.
\end{equation}
We define
\begin{equation*}
\hat{t}^\eta_\epsilon:=\inf\{t\in[t^\eta_\epsilon,t^*+\sigma]:\,\|v_\epsilon(t)\|\geq\eta\},
\end{equation*}
Note that, for $\epsilon$ small enough, by \eqref{smallnorm} we have that $\hat{t}^\eta_\epsilon>t^\eta_\epsilon$. By the definition of $\hat{t}^\eta_\epsilon$, applying, for fixed $t$, the mean-value theorem to the scalar-valued function $\tau\mapsto \langle \nabla_x F(t,\varphi(t)+\tau v_\epsilon(t)), v_\epsilon(t)\rangle$, and using \eqref{posdef}, we get 
\[
 \langle \nabla_x F(t,u_\epsilon(t)) - \nabla_x F(t,\varphi(t)), v_\epsilon(t)\rangle \geq \lambda\|v_\epsilon(t)\|^2\,.
\]
for all $t\in [t^\eta_\epsilon,\hat{t}^\eta_\epsilon]$.
From this and \eqref{veps}, it follows
\begin{equation*}
\begin{split}
\epsilon \frac{\mathrm{d}}{\mathrm{d}t}\frac{\|v_\epsilon(t)\|^2}{2}&=\epsilon \langle v_\epsilon(t), \dot{t}_\epsilon(t)\rangle= \langle -\nabla_x F(t,u_\epsilon(t)) + \nabla_x F(t,\varphi(t)) -\epsilon\dot{\varphi}(t), v_\epsilon(t)\rangle\\
                                                     &
\leq -\lambda\|v_\epsilon(t)\|^2+\frac{\epsilon}{2}\|\dot{\varphi}(t)\|^2+\frac{\epsilon}{2}\|v_\epsilon(t)\|^2\,.
\end{split}
\end{equation*}
Now, if $C_\varphi$ is an upper bound for $\|\dot{\varphi}(t)\|^2$ in $ [t^*-\sigma,t^*+\sigma]$ and we set $C_{\lambda,\epsilon}:=\left(-\frac{2\lambda}{\epsilon}+1\right)\to-\infty$ as $\epsilon\to0$, we finally get
\[
 \frac{\mathrm{d}}{\mathrm{d}t}\|v_\epsilon(t)\|^2\leq C_{\lambda,\epsilon}\|v_\epsilon(t)\|^2+C_\varphi.
\]
If $\hat{t}^\eta_\epsilon$ were smaller than $t^*+\sigma$, one must have $\frac{\mathrm{d}}{\mathrm{d}t}\|v_\epsilon(\hat{t}^\eta_\epsilon)\|^2 \ge 0$, while the above inequality  gives
\[
\frac{\mathrm{d}}{\mathrm{d}t}\|v_\epsilon(\hat{t}^\eta_\epsilon)\|^2\leq C_{\lambda,\epsilon}\eta^2+C_\varphi<0
\]
when $\epsilon$ is small enough. This  proves that $\hat{t}^\eta_\epsilon=t^*+\sigma$. With this, and since $t^\eta_\epsilon\to t^*$,  for an arbitrary $t\in (t^*, t^*+\sigma]$ we have that $\|v_\epsilon(t)\|\leq\eta$ when $\epsilon$ is small enough. By the arbitrariness of $\eta$, we get $v_\epsilon(t)\to 0$ for all $t\in (t^*, t^*+\sigma]$. Thus, $u(t)=\varphi(t)$ in $(t^*, t^*+\sigma]$, concluding the proof.
\endproof

\subsection{A simplified setting: the one-dimensional case}\label{one-dimensional}
We finally revisit the application of our results to the one-dimensional setting, that is for $X=\mathbb{R}$, where they are mostly well-known (see, e.g.\, \cite{Russi}), but stated under slightly different assumptions. 
We consider the singularly perturbed 1D-problem

\begin{equation}
\begin{cases}
\epsilon\dot{u}_\epsilon=\displaystyle-\frac{\mathrm{d}F}{\mathrm{d}x}(t,u_\epsilon(t)),\quad t\in[0,T],\\
\displaystyle\lim_{\epsilon\to0}u_\epsilon(0)=0\,,
\end{cases}
\label{1Dproblem}
\end{equation}
where $F:[0,T]\times \mathbb{R}\to \mathbb{R}$ is an energy satisfying {\bf(F0)-(F3)} and the assumption
\begin{align}\label{eq2_1d}
\frac{\mathrm{d}F}{\mathrm{d}x}(t,0)=0
\end{align}
for all $t\in [0,T]$. Setting
$
A(t):=\frac{\mathrm{d}^2 F}{\mathrm{d}x^2}(t,0),\,t\in[0,T] 
$,
\eqref{ipotesi_fond} reads in our case simply as
\begin{align}\label{ipfond2}
A(0)>0 \quad\hbox{and}\quad\int_{0}^T A(s)\,\mathrm{d}s<0\,.
\end{align}
The time $t^*$ is then defined as
\begin{equation}
t^*:=\min\left\{t\in(0,T):\,\int_{0}^tA(s)\,\mathrm{d}s=0\right\}\,.
\label{tstarbis}
\end{equation}
Assumption {\bf(A2)} is trivially satisfied in this setting, while {\bf (A3)} reduces to
\begin{align}\label{a31d} 
A(t^*)<0\,.
\end{align}

Under \eqref{eq2_1d}, \eqref{ipfond2} and \eqref{a31d}, the pointwise limit $u(t)$ of the solutions  $u_\epsilon$  to problem \eqref{1Dproblem} satisfies $u(t)=0$ for  $t\in[0,t^*)$ by Proposition \ref{propzero}, while a jump occurs at $t=t^*$, provided \eqref{decay} holds. Notice that \eqref{assume} is trivially satisfied in this setting. To analyse the behavior at $t^*$, we consider the smallest strictly positive and strictly negative critical point $u^{*,\pm}\in C(t^*)$ defined as
\begin{equation}\label{upm}
\begin{array}{c}
\displaystyle
u^{*,+}=\min\{v:\,v\in C(t^*),\, v>0\}\,,\\[5pt]
 u^{*,-}=\max  \{v:\,v\in C(t^*),\,v<0\}\,,
\end{array}
\end{equation}
respectively. Then, the following one-dimensional analog of Theorem \ref{jump2} holds. As one might expect, in the proof we can bypass the application of Proposition \ref{hetero-prel}: no previous knowledge about convergence of a rescaled version of the $u_\epsilon$ to a heteroclinic solution of \eqref{heterosystem} is actually needed. Notice also that \eqref{locmin1d} and \eqref{locmin21d} can be stated in a weaker way than \eqref{locmin1} and \eqref{locmin2}, respectively.
\begin{prop}
Assume that $X=\mathbb{R}$ and consider an energy $F$ satisfying {\bf(F0)-(F3)}, as well as \eqref{eq2_1d}. Let $(u_\epsilon)_\e$ be a sequence of solutions to \eqref{1Dproblem}, with initial data $u_\epsilon(0)$ as in \eqref{decay}.  Let $u:[0,T]\to \mathbb{R}$ be the pointwise limit  of $(u_\epsilon)_\epsilon$, as given by Theorem \ref{AR}. 
Assume that  \eqref{ipfond2} and \eqref{a31d} hold, for $t^*$ as in \eqref{tstarbis}, and define $u^{*,+}$ and $u^{*,-}$ as in \eqref{upm}.
Then:
\begin{itemize}
\item if 
\begin{equation}\label{locmin1d}
\frac{\mathrm{d}^2 }{\mathrm{d}x^2}F(t^*, u^{*,+}) \neq 0\quad\mbox{and}\quad u_\epsilon(0) >0\,,
\end{equation}
it holds $u_+(t^*)=u^{*,+}$. Furthermore, $u$ is of class $C^2$ in a right neighborhood of $t^*$.
\item if 
\begin{equation}\label{locmin21d}
\frac{\mathrm{d}^2 }{\mathrm{d}x^2}F(t^*, u^{*,-}) \neq 0\quad\mbox{and}\quad u_\epsilon(0) <0\,,
\end{equation}
it holds $u_+(t^*)=u^{*,-}$. Furthermore, $u$ is of class $C^2$ in a right neighborhood of $t^*$.
\end{itemize}
\end{prop}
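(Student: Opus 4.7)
The plan is to adapt the argument of Theorem \ref{jump2} to the scalar case, exploiting two simplifications specific to dimension one: sign preservation of the trajectories $u_\epsilon$, and a direct construction of the capture sequence $t_\epsilon^\eta\to t^*$ with $|u_\epsilon(t_\epsilon^\eta)-u^{*,+}|<\eta$ that bypasses the heteroclinic analysis of Proposition \ref{hetero-prel}. I treat the case \eqref{locmin1d}; \eqref{locmin21d} is analogous by symmetry.

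First I would upgrade the nondegeneracy condition $\frac{\mathrm{d}^2F}{\mathrm{d}x^2}(t^*,u^{*,+})\neq 0$ to strict positivity. Since $\frac{\mathrm{d}F}{\mathrm{d}x}(t^*,0)=0$, $A(t^*)<0$, and $u^{*,+}$ is by definition the smallest positive zero of $\frac{\mathrm{d}F}{\mathrm{d}x}(t^*,\cdot)$, continuity and the absence of intermediate zeros force $\frac{\mathrm{d}F}{\mathrm{d}x}(t^*,u)<0$ on the whole interval $(0,u^{*,+})$. Hence $u^{*,+}$ is a nondegenerate critical point reached from below by negative values of $\frac{\mathrm{d}F}{\mathrm{d}x}(t^*,\cdot)$, which leaves $\frac{\mathrm{d}^2F}{\mathrm{d}x^2}(t^*,u^{*,+})>0$ as the only option; in particular $u^{*,+}$ is a strict local minimizer of $F(t^*,\cdot)$. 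The Implicit Function Theorem then yields $\sigma,\delta,\lambda>0$ and a curve $\varphi\in C^2([t^*-\sigma,t^*+\sigma])$ with $\varphi(t^*)=u^{*,+}$, $\frac{\mathrm{d}F}{\mathrm{d}x}(t,\varphi(t))=0$, and $\frac{\mathrm{d}^2F}{\mathrm{d}x^2}(t,\psi)\geq \lambda$ whenever $|\psi-\varphi(t)|\leq \delta$ and $t\in[t^*-\sigma,t^*+\sigma]$. Moreover, since $v\equiv 0$ solves $\epsilon \dot v=-\frac{\mathrm{d}F}{\mathrm{d}x}(t,v)$, uniqueness of the Cauchy problem under {\bf(F0)} forces any solution with $u_\epsilon(0)>0$ to remain strictly positive throughout $[0,T]$.

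The central new point is to produce, for every fixed $\eta\in(0,\min\{\delta,u^{*,+}/4\})$, a sequence $t_\epsilon^\eta\to t^*$ for which $|u_\epsilon(t_\epsilon^\eta)-u^{*,+}|<\eta$, replacing the role played by \eqref{asserto} in Theorem \ref{jump2}. Since \eqref{assume} is automatic in dimension one, Theorem \ref{jump} applies; its proof (Claim 1 together with Proposition \ref{propzero} and the sign preservation above) yields $t_\epsilon^2\to t^*$ with $u_\epsilon(t_\epsilon^2)=\mu:=\eta/2$ for all $\epsilon$ small enough. On the compact corridor $K:=[\eta/2,u^{*,+}-\eta/2]\times[t^*-\sigma',t^*+\sigma']$ for suitably small $\sigma'$, uniform continuity combined with the strict negativity established above gives a constant $c>0$ with $\frac{\mathrm{d}F}{\mathrm{d}x}(t,u)\leq -c$ on $K$. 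Hence $\dot u_\epsilon\geq c/\epsilon$ as long as $u_\epsilon(t)\in[\eta/2,u^{*,+}-\eta/2]$, so this corridor is traversed in time $O(\epsilon)$ and $u_\epsilon$ enters $(u^{*,+}-\eta/2,u^{*,+}+\eta)$ at some $t_\epsilon^\eta$ with $t_\epsilon^\eta-t_\epsilon^2=O(\epsilon)$. One must further verify that $u_\epsilon$ cannot overshoot $u^{*,+}+\eta$: this follows because $\frac{\mathrm{d}^2F}{\mathrm{d}x^2}(t,\varphi(t))\geq 2\lambda$ together with Taylor expansion gives $\frac{\mathrm{d}F}{\mathrm{d}x}(t,u)>0$ on the strip $\{(t,u):\,t\in[t^*-\sigma',t^*+\sigma'],\,\varphi(t)<u\leq \varphi(t)+\eta\}$, creating a restoring force that pushes $u_\epsilon$ back downward.

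Once this capture step is available, the Gronwall argument of Theorem \ref{jump2} transfers verbatim to the scalar setting: writing $v_\epsilon(t):=u_\epsilon(t)-\varphi(t)$, applying the mean value theorem to the $C^1$ function $\frac{\mathrm{d}F}{\mathrm{d}x}(t,\cdot)$ on the segment between $\varphi(t)$ and $u_\epsilon(t)$, and exploiting $\frac{\mathrm{d}^2F}{\mathrm{d}x^2}(t,\psi)\geq \lambda$ on the $\delta$-tube around $\varphi$, one deduces $|v_\epsilon(t)|\leq \eta$ for all $t\in[t_\epsilon^\eta,t^*+\sigma]$. Passing to the limit $\epsilon\to 0$ and then letting $\eta\to 0$ yields $u(t)=\varphi(t)$ on $(t^*,t^*+\sigma]$, whence $u_+(t^*)=u^{*,+}$ and the announced $C^2$-regularity in a right neighborhood of $t^*$. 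The main obstacle is precisely the capture step: reaching a neighborhood of $u^{*,+}$ from below by hand, using the monotonicity of the one-dimensional flow instead of Proposition \ref{hetero-prel}, and ruling out any overshoot past $u^{*,+}$ on the fast timescale $O(\epsilon)$.
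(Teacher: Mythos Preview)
Your argument is essentially correct and shares with the paper the three key ingredients (sign preservation, the upgrade of nondegeneracy to strict positivity of $\frac{\mathrm{d}^2F}{\mathrm{d}x^2}(t^*,u^{*,+})$, and the final Gronwall trap borrowed from Theorem~\ref{jump2}), but you take a more laborious route to the capture step. The paper avoids the corridor argument entirely by exploiting compactness more aggressively: since Theorem~\ref{jump} already gives $u_+(t^*)\neq 0$, sign preservation together with Theorem~\ref{AR}(ii) forces $u_+(t^*)\in C(t^*)$ with $u_+(t^*)>0$, hence $u_+(t^*)\geq u^{*,+}$. One then simply sets $t_\epsilon^\eta:=\min\{t:u_\epsilon(t)=u^{*,+}-\eta\}$; the intermediate value theorem guarantees well-posedness, and $t_\epsilon^\eta\to t^*$ follows by a two-line contradiction (were $\limsup_{\epsilon\to 0} t_\epsilon^\eta>t^*$, pointwise convergence would yield $u_+(t^*)\leq u^{*,+}-\eta$). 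This replaces your corridor traversal and makes the overshoot discussion unnecessary.

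Your approach has the virtue of being closer to the ODE dynamics and independent of the a priori location of $u_+(t^*)$, but note a small gap: invoking Claim~1 of Theorem~\ref{jump} with the \emph{prescribed} value $\mu=\eta/2$ does not directly give $t_\epsilon^2\to t^*$. In that proof $\mu$ is chosen after fixing $\hat t>t^*$ (through the auxiliary constant in \eqref{scarto2}), and for a fixed $\mu$ one only obtains $\limsup_{\epsilon\to 0} t_\epsilon^2\leq \hat t(\eta)$ for some $\hat t(\eta)>t^*$ with $\hat t(\eta)\to t^*$ as $\eta\to 0$. This weaker statement still suffices for the final $\eta\to 0$ passage, but it should be stated; alternatively, you can close the gap cleanly by inserting the paper's observation $u_+(t^*)\geq u^{*,+}$ at the outset.
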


\proof  
We prove the statement under assumption \eqref{locmin1d}, the proof of the other case being totally analogous. 
We begin by noticing that, since \eqref{a31d} gives $\frac{\mathrm{d}^2 }{\mathrm{d}x^2}F(t^*,0) < 0$,  from \eqref{upm} and \eqref{locmin1d} we immediately get
\[
\frac{\mathrm{d}^2 }{\mathrm{d}x^2}F(t^*, u^{*,+}) > 0\,.
\] 
We also observe that by Theorem \ref{AR} and {\bf(F0)}, it must hold $u_+(t^*)\in C(t^*)$. Since $u=0$ is a stationary solution of \eqref{1Dproblem}, we have $u_\epsilon(t)>0$ for all $t$, and thus $u(t)\ge 0$. Since $u(t)=0$ for  $t\in[0,t^*)$ by Proposition \ref{propzero}, while a jump occurs at $t=t^*$ by Theorem \ref{jump}, we have  $u_+(t^*)\ge u^{*,+}$. For an arbitrary $\eta>0$ we then set
\[
t^\eta_\epsilon:=\min\{t\in [0,T]: u_\epsilon(t)= u^{*,+}-\eta\}\,.
\]
Observe that, by the pointwise convergence of $u_\epsilon$ to $u$ and exploiting the continuity of the functions $u_\epsilon$,  the well-posedness of $t^\eta_\epsilon$ easily follows from the conditions $u_\epsilon(0)\to 0$ and $u_+(t^*)\ge u^{*,+}$.  Furthermore, from Proposition \ref{propzero} we have $\liminf_{\epsilon \to 0} t^\eta_\epsilon \ge t^*$.  Now, if $\limsup_{\epsilon \to 0} t^\eta_\epsilon > t^*$, we obtain $u_+(t^*)\le  u^{*,+}-\eta$, a contradiction. We therefore have found a sequence $t^\eta_\epsilon \to t^*$ with
\[
|u_\epsilon(t^\eta_\epsilon)- u^{*,+}|=\eta\,.
\]
We have thus established an analogous implication as \eqref{smallnorm}: with this, the same argument as in Theorem \ref{jump2}, with $u^{*,+}$ in place of $\overline{u}^*$, gives the desired conclusion.
\endproof

\end{document}